\newcommand{\SNAKE}[1]{
			\hspace{0.3cm}
			\begin{tikzpicture}
  				\draw[thick, snake] (-1,0) -- (1,0);
				\draw[thick, ->, >=stealth] (1,0) -- (1.5,0);
				\node at (0,-#1) {\phantom{.}};
			\end{tikzpicture}
			\hspace{0.3cm} }
\newcounter{thm}
\newtheorem{theorem}[thm]{Theorem}
\newtheorem{corollary}[thm]{Corollary}
\newtheorem*{conjecture*}{Conjecture}
\newtheorem{lemma}[thm]{Lemma}
\newtheorem{prop}[thm]{Proposition}
\newtheorem*{statement*}{Statement}
\theoremstyle{definition}
\newtheorem{example}[thm]{Example}
\theoremstyle{definition}
\newcommand{\hm}{\cong}
\renewcommand{\S}{\Sigma}
\newcommand{\sol}{\mathit{Sol}}
\newcommand{\mless}{\backslash\backslash}
\newcommand{\SmL}{\Sigma \mless L}
\newcommand{\abs}[1]{|#1|}
\begin{document}

\title{The Simple Loop Conjecture for 3-Manifolds Modeled on $\sol$}
\author{Drew Zemke}
\date{}
\maketitle

\begin{abstract}
	The simple loop conjecture for 3-manifolds states that every 2-sided immersion of a closed surface into a 
	3-manifold is either injective on fundamental groups or admits a compression. This can be viewed as a 
	generalization of the Loop Theorem to immersed surfaces. We prove the conjecture in the case that the 
	target 3-manifold admits a geometric structure modeled on $\sol$.
\end{abstract}

\section{Introduction}
	\label{sec:intro}
	
	The Simple Loop Conjecture for 3-manifolds is as follows.
	
	\begin{conjecture*}[Problem 3.96 in \cite{Kirby1995}]
		Let $\S$ be a closed surface and let $M$ be a closed 3-manifold.  If $F : \S \to M$ is a 
		2-sided immersion for which the induced map $F_* : \pi_1 \S \to \pi_1M$ is not injective,
		then there is an essential simple loop in $\S$ that represents an element of the kernel of $F_*$.
	\end{conjecture*}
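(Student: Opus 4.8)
Write $G=F_*(\pi_1\Sigma)$, $K=\ker F_*$, and fix an $\sol$ metric on $M$. The plan is first to clear away the easy surfaces: $\Sigma=S^2$ is vacuous, and for $\Sigma=\mathbb{RP}^2$ the core circle is an essential simple loop carrying the order-two generator of $\pi_1\Sigma$, which lies in $K$ whenever $F_*$ is non-injective; so assume $\pi_1\Sigma$ infinite. Since $M$ is aspherical, $\pi_1 M$ is torsion-free, and since it is modeled on $\sol$ it is polycyclic but not virtually nilpotent, so its Fitting subgroup $T=\mathrm{Fitt}(\pi_1 M)$ is a characteristic $\mathbb{Z}^2$ with $Q:=\pi_1 M/T$ infinite cyclic (the torus-bundle case) or infinite dihedral (the torus-semibundle case), the infinite-order elements of $Q$ acting on $T$ through an Anosov matrix $\Phi$ with real eigenvalues $\lambda^{\pm1}$, $\lambda>1$, and irrational eigendirections. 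If $\Sigma$ is a torus or Klein bottle I would finish by hand, using torsion-freeness of $\pi_1 M$ and the list of normal subgroups of $\pi_1\Sigma$ (for $\langle a,b\mid bab^{-1}a\rangle$: if $K\cap\langle a\rangle\neq1$ then $F_*(a)$ is torsion hence trivial and the fiber circle $a$ works, otherwise $K$ lands in the centre $\langle b^2\rangle$ and the boundary of a M\"obius neighbourhood works). Henceforth assume $\chi(\Sigma)<0$, so $K\neq1$ as given.

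\textbf{A dichotomy.} Let $\phi\colon\pi_1\Sigma\to Q$ be $F_*$ followed by the quotient, so $K\le\ker\phi=F_*^{-1}(T)$. If $\phi(\pi_1\Sigma)$ is finite then $G$ is virtually abelian of Hirsch length $\le2$, and $F$ is homotopic to a composite $\Sigma\to S\to M$ with $S\in\{S^1,T^2,\text{Klein bottle}\}$ and $S\to M$ injective on $\pi_1$; a simple loop in $K$ is then a simple loop in the kernel of $\Sigma\to S$, which I obtain from the (elementary) Simple Loop Conjecture for maps into these surfaces --- for the abelian targets just the classical description of which homology classes of $\Sigma$, orientable or not, are carried by essential simple closed curves (an essential separating curve in the orientable case). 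The remaining, main, case is $\phi(\pi_1\Sigma)$ infinite.

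\textbf{The main case.} Here $\phi(\pi_1\Sigma)$ has an infinite-order element, and I realize $\phi$ by a transverse map from $\Sigma$ to $S^1$ or to the Bass--Serre tree of $D_\infty$. The preimage of a regular value is a multicurve; discarding inessential components (lowering the component count, and unable to empty it without $\phi$ becoming finite) leaves essential simple closed curves $c_1,\dots,c_m$ with every $\phi(c_i)=1$, hence $c_i\in F_*^{-1}(T)$ and $v_i:=F_*(c_i)\in T\cong\mathbb{Z}^2$. If some $v_i=0$ then $c_i$ is the sought loop, so suppose every $v_i\neq0$; I claim this is impossible. Let $\widetilde M\to M$ be the cover with $\pi_1\widetilde M=T$: it is isometric to $\sol/T\cong(\mathbb{R}^2/T)\times\mathbb{R}$ with the warped $\sol$ metric, and the deck group $Q$ acts so that an infinite-order element translates the $\mathbb{R}$-factor by $\log\lambda$ and acts on $T=\pi_1\widetilde M$ by a nonzero power $\Phi^k$. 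Let $\Sigma_\phi\to\Sigma$ be the cover with $\pi_1\Sigma_\phi=\ker\phi$, lift $F$ to an equivariant $\widetilde F\colon\Sigma_\phi\to\widetilde M$ (Lipschitz with the same constant as $F$, as the covering projections are local isometries), choose an infinite-order deck transformation $\tau$ of $\Sigma_\phi$ (replacing it by its inverse so that $nk\to+\infty$ as $n\to+\infty$), and lift $c_i$ to a loop $\widetilde c_i$. On the one hand $\tau^n\widetilde c_i$ has the fixed length $\mathrm{length}(c_i)$, so $\mathrm{length}(\widetilde F(\tau^n\widetilde c_i))$ is bounded independently of $n$. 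On the other hand $\widetilde F(\tau^n\widetilde c_i)$ is the image of the fixed loop $\widetilde F(\widetilde c_i)$ under the $n$-th power of the deck action, so its $\mathbb{R}$-coordinate lies in a fixed-width window about $nk\log\lambda$, while it represents the class $\Phi^{nk}v_i$; since $v_i$ is off the irrational eigendirections, both eigen-coordinates of $v_i$ are nonzero, and integrating $e^{z}\,du$ over the loop forces $\mathrm{length}(\widetilde F(\tau^n\widetilde c_i))\geq c\,\lambda^{2nk}\to\infty$ --- a contradiction. Hence some $v_i=0$, and the theorem follows.

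\textbf{The main obstacle.} The delicate point is precisely this last estimate. Since $\widetilde M\cong\sol/T$ is non-compact, the minimal length of a representative of a homotopy class is \emph{not} a proper function of the class --- shortest representatives escape to infinity along the $\mathbb{R}$-factor --- so one cannot argue merely that ``$\Phi^{nk}v_i$ is a long class''. It is the equivariance that rescues the argument: it localizes $\widetilde F(\tau^n\widetilde c_i)$ along the $\mathbb{R}$-factor, and at that height the warped $\sol$ metric does make the class $\Phi^{nk}v_i$ genuinely long; this is exactly where the Anosov hypothesis is essential. The rest is bookkeeping: verifying the structural description of $\pi_1 M$ uniformly in the torus-bundle and torus-semibundle cases, and treating the virtually abelian case for all closed surfaces, including the non-orientable ones.
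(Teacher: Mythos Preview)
Your reductions and the dichotomy are sensible and close in spirit to the paper's set-up, but the metric argument in the main case collapses. The deck transformation $\bar\tau$ of $\widetilde M\to M$ is an \emph{isometry} of the lifted $\sol$ metric (that is how the $\sol$ structure on $M$ is defined), and your equivariance gives $\widetilde F(\tau^n\widetilde c_i)=\bar\tau^{\,n}\cdot\widetilde F(\widetilde c_i)$. Hence
\[
\mathrm{length}\bigl(\widetilde F(\tau^n\widetilde c_i)\bigr)=\mathrm{length}\bigl(\widetilde F(\widetilde c_i)\bigr)
\]
for every $n$, and there is no growth to contradict the Lipschitz bound. Concretely, in coordinates with $ds^2=e^{2z}du^2+e^{-2z}dv^2+dz^2$, the isometry raising $z$ by $t=k\log\lambda$ acts on the $(u,v)$-plane by $(u,v)\mapsto(e^{-t}u,e^{t}v)$; so the $u$-eigencomponent of the homology class is multiplied by $\lambda^{-nk}$, not $\lambda^{nk}$, and your integral $\int e^{z}\,|du|$ gives the constant $e^{z_0-C}\lvert a_i\rvert$ rather than $c\,\lambda^{2nk}$. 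The two effects you are trying to compound in fact cancel exactly---that cancellation \emph{is} the statement that $\bar\tau$ is an isometry. Your ``main obstacle'' paragraph correctly senses that shortest representatives escape to infinity in $\widetilde M$, but the equivariance does not rescue the estimate; it is precisely what makes the estimate trivial.

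For contrast, the paper never tries to show that the transverse curves $c_i$ themselves lie in $\ker F_*$. Instead, after homotoping so that $L=F^{-1}(\text{fibre or middle torus})$ is a single essential loop (torus-bundle case) or a collection of parallel separating loops (semi-bundle case), it observes that each complementary piece of $\Sigma$ maps into a region with abelian (or virtually abelian) fundamental group; then any essential separating simple loop bounding a once-punctured torus inside such a piece is a commutator of loops whose images lie in an abelian group, hence dies under $F_*$. The Anosov hypothesis enters only in the genus-two semi-bundle case, and there it is used algebraically (via the gluing matrix), not metrically. If you want to salvage a geometric approach, you would need an inequality that is \emph{not} invariant under the deck action---for instance comparing the curve to something fixed in $\widetilde M$---but nothing of that sort is set up here.
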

	
	When the map $F$ is an embedding, this follows from the loop theorem of Papakyriakopoulos (see, 
	for instance, \cite{Hempel1976}).
	
	The Simple Loop Conjecture is known to hold when the target 3-manifold is a Seifert fibered 3-manifold 
	or a graph 3-manifold, by the work of Hass \cite{Hass1987} and Rubinstein-Wang\footnote{It is unclear whether the techniques of \cite{Rubinstein1998} apply to $\sol$ manifolds, 
	though they seem to be implicitly ruling them out (see for instance, \cite[Lemma 1.0.2]{Rubinstein1998}).
	At any rate, the techniques in this paper offer a substantially different approach to the problem.} \cite{Rubinstein1998}, 
	respectively.  An analogous result for maps between surfaces is due to Gabai \cite{Gabai1985}.  
	
	The goal of this paper is the following result.
	
	\begin{theorem}
		\label{thm:SLCforSolvmflds}
		The Simple Loop Conjecture holds when the target 3-manifold admits a geometric structure modeled on 
		$\sol$.
	\end{theorem}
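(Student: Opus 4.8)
The plan is to reduce the statement, in stages, to a two-dimensional problem and then to a combinatorial one. Recall first that a closed $\sol$-manifold $M$ is aspherical, so $\pi_1 M$ is torsion-free, and $M$ is finitely covered by a torus bundle over $S^1$; thus, up to finite index, $\pi_1 M$ fits into an extension $1 \to \mathbb{Z}^2 \to \pi_1 M \to \mathbb{Z} \to 1$ in which the conjugation action of the quotient on $\mathbb{Z}^2$ is by a hyperbolic matrix $A$ (no eigenvalue on the unit circle). In particular $\pi_1 M$ is virtually solvable, and the canonical normal $\mathbb{Z}^2$ has no rank-one sublattice invariant under the whole group. Now dispose of small surfaces: if $\Sigma = S^2$ the statement is vacuous, and if $\Sigma = \mathbb{R}P^2$ then $F_*$ is trivial and the one-sided circle lies in its kernel; if $\Sigma$ is a torus or a Klein bottle one verifies the conjecture directly, using torsion-freeness of $\pi_1 M$ (which forces any proper power in the kernel to be accompanied by its root) together with the classification of simple closed curves on $\Sigma$. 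So assume $\chi(\Sigma) < 0$. Then $\pi_1\Sigma$ contains a nonabelian free subgroup and so is not virtually solvable, whence $F_*$ is automatically non-injective; the task is to promote some nontrivial kernel element to an embedded one.

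Next I would reduce to the case that $F_*$ is surjective. Let $G = F_*(\pi_1\Sigma)$. If $G$ has infinite index in $\pi_1 M$, then intersecting with the canonical $\mathbb{Z}^2$ and using that no rank-one sublattice is invariant under a power of $A$, one finds that $G$ is virtually $\mathbb{Z}^2$, virtually $\mathbb{Z}$, or trivial, hence (being torsion-free) the fundamental group of a disk, annulus, torus, or Klein bottle. As these surfaces are aspherical, $F_*$ is realized by a map of $\Sigma$ into one of them, and the theorem follows from the simple loop conjecture for maps between surfaces due to Gabai \cite{Gabai1985} (and directly, via dual non-separating curves, when the target is a disk or annulus). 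If $G$ has finite index, pass to the corresponding finite cover $\widehat M \to M$: then $\widehat M$ is again a closed $\sol$-manifold, $F$ lifts to a $2$-sided immersion $\widehat F : \Sigma \to \widehat M$ with $\widehat F_*$ surjective and $\ker \widehat F_* = \ker F_*$. So we may assume $F_*$ is onto.

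Now exploit the torus fibration. Write $p : M \to B$ for the fibration of $M$ over a $1$-orbifold whose generic fiber is a torus, $\phi = p_* : \pi_1 M \to \pi_1^{\mathrm{orb}}(B)$, and let $N = \ker\phi \cong \mathbb{Z}^2$ be the fiber subgroup. Since $F_*$ is onto, $\psi := \phi \circ F_* \in H^1(\Sigma;\mathbb{Z})$ is primitive. Homotope $F$ so that $p \circ F$ is transverse to a regular value $b_0$ and $C := (p\circ F)^{-1}(b_0)$ is an embedded multicurve on $\Sigma$; after pushing across disks, no component of $C$ bounds one. If some component of $C$ is null-homotopic in $M$, that component is an essential simple closed curve of $\Sigma$ lying in $\ker F_*$, and we are done. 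Otherwise each component of $C$ is sent to an essential loop in the fiber torus, with well-defined class in $N$, and cutting $\Sigma$ along $C$ yields compact pieces $S_1,\dots,S_r$, each mapped by $F$ into the complementary $T^2 \times I$, so that each $F_*|_{\pi_1 S_j}$ factors through a homomorphism $H_1(S_j;\mathbb{Z}) \to N$. Equivalently, in the infinite cyclic cover $\Sigma_\psi \to \Sigma$ determined by $\psi$, the immersion lifts to $\widetilde F : \Sigma_\psi \to T^2 \times \mathbb{R}$ with $\widetilde F_* : \pi_1\Sigma_\psi \to N$ equivariant for the deck action and the action of $A$ on $N$, and $\ker F_* = \ker\widetilde F_*$ is computed from the induced map $H_1(\Sigma_\psi;\mathbb{Z}) \to N$ of $\mathbb{Z}[t^{\pm 1}]$-modules.

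The main obstacle is the last step: finding, in this picture, an essential simple closed curve of $\Sigma$ representing a kernel element. A simple closed curve interior to a piece $S_j$ whose class lies in $\ker(H_1(S_j;\mathbb{Z}) \to N)$ would do, since such a curve, being disjoint from $C$, is automatically essential in $\Sigma$. Counting first Betti numbers produces such a class in any piece of positive genus or with enough boundary; the genuine difficulty is a surface $\Sigma$ that our function cuts into annuli and pairs of pants — for instance along a pants decomposition — with every component of $C$ carrying a nonzero class in $N$, for then the simple closed curves interior to the pieces are all boundary-parallel, with classes exactly the nonzero classes of components of $C$. Ruling this out is where hyperbolicity of $A$ must enter: as the generic fiber $(p\circ F)^{-1}(t)$ sweeps once around $B$ it undergoes the band moves recorded by the saddles of $p\circ F$ and returns to itself only after the monodromy $A$ has been applied to the fiber torus, so the cyclic family of multicurves in $T^2$ is $A$-periodic; since $A$ fixes no rational slope, tracking the slopes of the components forces either a component to become null-homotopic — a simple kernel curve — or an incompatibility with $F_*$ surjecting onto $\pi_1 M$. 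The remaining cases — non-orientable $\Sigma$, and the $\sol$-manifolds that are Klein-bottle bundles over $S^1$ or unions of two twisted $I$-bundles over the Klein bottle — are treated in the same framework, using the orbifold fibration over $S^1$ or over a reflector interval and passing to orientation double covers where the curve combinatorics requires it, taking care that the simple loop produced descends to $\Sigma$.
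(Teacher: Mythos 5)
There is a genuine gap, and it sits exactly where you locate ``the main obstacle.'' After making $p\circ F$ transverse to a regular value you are left with a multicurve $C$ whose complementary pieces might all be annuli and pairs of pants, and your proposed resolution --- tracking $A$-periodic families of slopes through the saddles of $p\circ F$ and invoking that the hyperbolic monodromy fixes no rational slope --- is a heuristic, not an argument: nothing is said about how the ``band moves'' interact with the classes in $N$, why the alternative is exactly ``a component becomes null-homotopic or surjectivity fails,'' or how this survives the orbifold base. Moreover hyperbolicity is the wrong tool here: the paper proves the simple loop conjecture for \emph{all} torus bundles (Theorem \ref{thm:SLCforTBs}), with no hypothesis on the monodromy. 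The correct move, which your proposal skips, is to first \emph{reduce $C$ to a single curve}: whenever a complementary piece has two boundary circles co-oriented coherently (which happens as soon as some piece has three or more boundary components, or has one, or --- by the surjectivity of $\psi$ --- whenever $|C|>1$), one surgers $C$ along an arc joining them by a local homotopy of $F$ (Lemmas \ref{lem:MapsToInterval} and \ref{lem:MapsToCircle}). Once $C$ is connected, $\Sigma\mless C$ is a genus-$(g-1)$ surface with two boundary circles mapping into $T\times I$, whose fundamental group is abelian; the boundary of any embedded punctured torus in it is a separating essential simple closed curve representing a commutator, hence lies in $\ker F_*$. Your pants-decomposition nightmare never arises.

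Two further omissions. First, the case analysis of which manifolds actually carry $\sol$ structures is needed and absent: Klein bottle bundles and Klein bottle semi-bundles are never $\sol$ (they are Euclidean up to finite covers), so they need not be ``treated in the same framework''; what \emph{does} need separate treatment is the orientable torus semi-bundle, where the middle torus separates and the preimage multicurve is a union of parallel separating curves, so the commutator trick needs the monodromies $\rho_i:\pi_1 N_i\to\Z/2$ and a Lefschetz-duality argument to locate a punctured torus mapping into the boundary subgroup. Second, and most seriously for the semi-bundle case, the genus-two surface is a genuine exception: for $\chi(\Sigma)=-2$ the topological argument fails and one must compute directly in the presentation of $\pi_1 M$, where the conclusion is that either the separating curve itself dies or the gluing matrix has $s=0$ --- i.e.\ this is the one place the $\sol$ hypothesis ($rstu\ne 0$ in Theorem \ref{thm:solvmfldClassification}) is actually used. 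Your one-sentence dismissal of the low-genus and semi-bundle cases leaves out the only part of the proof where the geometric hypothesis does any work.
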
 
	
	If $M$ is a 3-manifold that is finitely covered by a torus bundle over $S^1$, then $M$ admits a geometric
	structure modeled on one of Euclidean 3-space, \textit{Nil}, or $\sol$.  Since all Euclidean and
	\textit{Nil} manifolds are Seifert fibered (see \cite{Scott1983}), we obtain the following corollary.
	
	\begin{corollary}
		\label{cor:SLCforMfldsCoveredByTorusBundles}
		The Simple Loop Conjecture holds when the target 3-manifold is finitely covered by a torus
		bundle over $S^1$.
	\end{corollary}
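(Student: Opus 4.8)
The plan is to obtain the corollary as a direct consequence of Theorem~\ref{thm:SLCforSolvmflds} together with the previously known Seifert fibered case of the conjecture (Hass~\cite{Hass1987}), by first pinning down which geometry the target $M$ carries. Throughout I would work with $M$ and the given 2-sided immersion $F\colon\S\to M$ directly: the finite cover is used only to classify $M$, never to modify $F$. (One might be tempted to pull the cover back along $F$ to get a surface covering $\tilde\S\to\S$ --- this does preserve non-injectivity of the map on $\pi_1$, since a surface group is torsion-free so some power of any nontrivial element of $\ker F_*$ lies in the finite-index subgroup $\pi_1\tilde\S$ --- but the image in $\S$ of an essential simple loop in $\tilde\S$ need not be simple, so this detour does not help and is not needed.)

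The only input beyond the two quoted theorems is the classification already recorded in the paragraph before the corollary: a closed 3-manifold finitely covered by a torus bundle over $S^1$ admits a geometric structure modeled on Euclidean 3-space, \textit{Nil}, or $\sol$. For completeness I would recall why: a torus bundle over $S^1$ is the mapping torus of some $A\in\mathrm{GL}_2(\mathbb{Z})$, and its total space carries a geometric structure modeled on Euclidean 3-space, \textit{Nil}, or $\sol$ according to whether $A$ has finite order, is parabolic of infinite order, or has an eigenvalue off the unit circle (these three cases exhaust $\mathrm{GL}_2(\mathbb{Z})$; see \cite{Scott1983}); and a closed 3-manifold $M$ with such a finite cover is aspherical with fundamental group containing a polycyclic group of finite index, so by the classification of closed 3-manifolds with virtually polycyclic fundamental group it carries the same geometry as its cover.

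With the geometry of $M$ identified the proof splits into cases. If $M$ is modeled on $\sol$, then Theorem~\ref{thm:SLCforSolvmflds} applies to $F$ and produces an essential simple loop in $\S$ representing an element of $\ker F_*$. If $M$ is modeled on Euclidean 3-space or on \textit{Nil}, then $M$ is Seifert fibered by \cite{Scott1983}, and Hass's theorem~\cite{Hass1987} likewise applies to $F$ and produces such a loop. I expect the classification step of the previous paragraph to be the only point requiring any real argument, and even it is classical; the remaining work is purely formal, the one thing to double-check being that Hass's result is stated for all closed Seifert fibered 3-manifolds (it is), so that it applies to $F$ without modification.
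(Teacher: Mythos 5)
Your proposal is correct and follows essentially the same route as the paper: the paper's justification is precisely that a closed 3-manifold finitely covered by a torus bundle over $S^1$ carries a Euclidean, \textit{Nil}, or $\sol$ structure, that the first two cases are Seifert fibered and hence covered by Hass's theorem, and that the $\sol$ case is Theorem \ref{thm:SLCforSolvmflds}. The extra detail you supply on the trichotomy for the monodromy matrix and on promoting the geometry from the cover to $M$ is a reasonable expansion of what the paper leaves as a citation to \cite{Scott1983}.
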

	
	This document is organized as follows.  In Section \ref{sec:defs} we give some definitions and notation for 
	the objects that will be studied.  Section \ref{sec:solvmflds} contains a brief survey of which compact
	3-manifolds admit geometric structures modeled on $\sol$.  This entails a refinement of a classification
	given by Scott in \cite{Scott1983}, and reduces the problem at hand to studying maps from closed surfaces into 
	certain kinds of torus bundles over $S^1$ and orientable torus semi-bundles.  In 
	Sections \ref{sec:TBs} and \ref{sec:TSBs} we give proofs of the Simple Loop Conjecture for these two types of 
	3-manifold, respectively.  We conclude in Section \ref{sec:SLCforMetabelianGroups} with some remarks regarding 
	how the results presented here relate to
	a group-theoretic formulation of the Simple Loop Conjecture, and it fails to hold when the target group is 
	metabelian.

\subsection*{Acknowledgment}
	The author is extremely grateful to Jason Manning for his thoughtful advice, friendly critique, and 
	patience.  An additional thanks is due to Alan Reid for pointing out the connection between Example 
	\ref{ex:group_slc_counterex} and Casson's construction in \cite{Livingston2000}.

\section{Definitions}
	\label{sec:defs}

	If $M$ is a connected manifold, the \emph{orientation character} of $M$ is a homomorphism 
	$\rho_M : \pi_1 M \to \Z/2$
	whose value on $b \in \pi_1M$ is nontrivial if and only if some (and hence any) loop in $M$ representing
	$b$ is orientation reversing.  (Equivalently, $\rho_M(b)$ is nontrivial if and only if $b$
	acts on the universal cover of $M$ by an orientation reversing homeomorphism.)  A manifold
	is orientable if and only if its orientation character is trivial.  
	
	If $M$ and $N$ are connected manifolds with orientation characters $\rho_M$ and $\rho_N$, a map $F : M \to N$ 
	is called \emph{2-sided} if $\rho_N \circ F_* = \rho_M$.  Otherwise $F$ is \emph{1-sided}.  Hence $F$
	is 2-sided if and only if it takes orientation preserving loops in $M$ to orientation preserving loops in $N$, 
	and likewise for orientation reversing loops.  There are other (equivalent) definitions of 2-sidedness for 
	immersions of manifolds, but since most of the arguments in this paper involve the fundamental groups of the 
	manifolds in question, the given definition will be more useful.  
	
	We will call a loop in a manifold $M$ \textit{essential} if it is neither nullhomotopic nor homotopic into
	the boundary of $M$.  Loops that are not essential will be called \textit{inessential}.
	
	For a space $X$, we write $\abs{X}$ to denote the number of connected components of $X$.  For
	a compact surface $\S$ with $L \subset \S$ an embedded closed 1-manifold, we will write 
	$\SmL$ to denote the metric completion of $\Sigma \less L$ (with respect to some choice of complete metric
	on $\S$).  Thus $\SmL$ is the space
	obtained by gluing copies of $S^1$ onto the open ends of $\Sigma \less L$.
		
	We refer the reader to \cite{Scott1983} for an explanation of what it means for a manifold to admit
	a geometric structure, as well as some basic facts about the Euclidean, \textit{Nil}, and $\sol$ geometries.  
	In particular, we will need the following two results.

	\begin{theorem}[{\cite[Theorem 5.2]{Scott1983}}]
	\label{thm:ScottUniqueGeoms}
	If $M$ is a closed 3-manifold which admits a geometric structure modeled on one of the eight geometries,
	then the geometry involved is unique.
	\end{theorem}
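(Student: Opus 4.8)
The plan is to extract the model geometry $X$ from invariants of $\pi_1 M$ alone. A closed $X$-manifold is isometric to $X/\Gamma$ with $\Gamma \cong \pi_1 M$ acting properly discontinuously and cocompactly by isometries, and any finite cover of an $X$-manifold is again an $X$-manifold, so I may work up to commensurability and pass freely to finite-index subgroups (in particular assume $\Gamma$ torsion-free and $M$ orientable). First I would dispose of the two geometries whose model space is not contractible. The only closed geometric $3$-manifolds with finite $\pi_1$ are the spherical space forms: for the other seven geometries $X$ is contractible or equals $S^2 \times \mathbb{R}$, and a compact quotient $X/\Gamma$ with $\Gamma$ finite would be either a closed aspherical manifold with finite (hence torsion) fundamental group or a compact quotient of the noncompact $S^2 \times \mathbb{R}$, both impossible. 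So $M$ has model $S^3$ iff $\pi_1 M$ is finite, and thereafter $M$ has model $S^2 \times \mathbb{R}$ iff $M$ is not aspherical (equivalently $\pi_2 M \neq 0$); in the remaining six cases $M$ is closed aspherical with $\Gamma$ infinite and virtually torsion-free.

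For those six I would separate $X$ by commensurability invariants of $\Gamma$. The manifold is Euclidean iff $\Gamma$ is virtually $\mathbb{Z}^3$: one direction is Bieberbach's theorem, and conversely no $\sol$-, $Nil$-, $\mathbb{H}^3$-, $\mathbb{H}^2 \times \mathbb{R}$-, or $\widetilde{\mathrm{SL}_2}$-lattice is virtually abelian (the $\sol$- and $Nil$-lattices are virtually polycyclic with, respectively, exponential growth and polynomial growth of degree $4$, while the other three are non-amenable --- classically for $\mathbb{H}^3$, and for the other two because a finite-index subgroup maps onto a cocompact Fuchsian group). It has model $Nil$ iff $\Gamma$ is virtually nilpotent but not virtually abelian; it has model $\sol$ iff $\Gamma$ is virtually polycyclic but not virtually nilpotent --- this is the dichotomy that matters for the present paper, and here I would record the structure theory explicitly: a $\sol$-lattice is virtually a split extension of $\mathbb{Z}$ by $\mathbb{Z}^2$ in which the $\mathbb{Z}$ acts on $\mathbb{Z}^2$ by a hyperbolic matrix, hence solvable of exponential growth, whereas no $\mathbb{H}$-type geometry has a virtually solvable fundamental group. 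Finally $M$ has model $\mathbb{H}^3$ iff $\Gamma$ is not virtually solvable and contains no copy of $\mathbb{Z}^2$; the remaining two geometries are eliminated because their fundamental groups, though not virtually solvable, do contain $\mathbb{Z}^2$ (the preimage in a suitable finite-index subgroup of a cyclic subgroup of the Fuchsian quotient is a central extension of $\mathbb{Z}$ by $\mathbb{Z}$, hence $\mathbb{Z}^2$).

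This leaves the separation of $\mathbb{H}^2 \times \mathbb{R}$ from $\widetilde{\mathrm{SL}_2}$, which is where the real content lies: both fundamental groups fail to be virtually solvable, contain $\mathbb{Z}^2$, and have a finite-index subgroup $\Gamma'$ fitting into a central extension $1 \to \mathbb{Z} \to \Gamma' \to Q \to 1$ with $Q$ a cocompact Fuchsian group; worse, the two model spaces are quasi-isometric, so no coarse invariant can see the difference. The point is that this central extension is virtually split in the $\mathbb{H}^2 \times \mathbb{R}$ case and virtually nonsplit in the $\widetilde{\mathrm{SL}_2}$ case. Concretely, a closed $\mathbb{H}^2 \times \mathbb{R}$-manifold is finitely covered by a product $F \times S^1$ of a closed surface of genus at least $2$ with a circle, whereas a closed $\widetilde{\mathrm{SL}_2}$-manifold is finitely covered by a circle bundle over such a surface with nonzero Euler number, and the Euler number of every further finite cover stays nonzero. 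Since the first homology of a circle bundle of genus $g$ and Euler number $e$ is $\mathbb{Z}^{2g} \oplus (\mathbb{Z}/e)$, which has torsion exactly when $e \neq 0$, no finite-index subgroup of a $\widetilde{\mathrm{SL}_2}$-lattice can be isomorphic to $\pi_1 F \times \mathbb{Z}$; equivalently one can invoke the uniqueness of the Seifert fibration on these manifolds (the base orbifold is hyperbolic) together with the fact that vanishing of the rational Euler number is a commensurability invariant.

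I expect this last step to be the main obstacle. Everything before it is bookkeeping or an appeal to the structure theory of virtually nilpotent and virtually polycyclic groups and to growth estimates (Bieberbach, Bass--Guivarc'h, Gromov), but separating $\mathbb{H}^2 \times \mathbb{R}$ from $\widetilde{\mathrm{SL}_2}$ genuinely forces one through the Seifert structure and the (non)triviality of its Euler class, precisely because the two are indistinguishable coarsely. One last point: the orientation character is irrelevant throughout, since the geometry is an invariant of the universal cover with its metric and of the homotopy type, both blind to orientation, so the $2$-sidedness conventions of Section \ref{sec:defs} play no role here.
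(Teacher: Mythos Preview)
The paper does not prove this theorem at all: it is stated with a citation to \cite[Theorem 5.2]{Scott1983} and then used as a black box (specifically in the proof of Theorem~\ref{thm:solvmfldClassification}, to rule out a $\sol$ structure on a manifold already known to carry a Euclidean one). There is therefore no ``paper's own proof'' against which to compare your attempt.

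That said, your outline is a reasonable sketch of the standard argument, and is broadly in the spirit of Scott's original proof in \cite{Scott1983}: separate $S^3$ by finiteness of $\pi_1$, separate $S^2 \times \R$ by non-asphericity, then distinguish the six aspherical geometries by commensurability invariants of the fundamental group (virtual abelianity, nilpotence, polycyclicity, presence of $\Z^2$), with the genuinely delicate case being $\mathbb{H}^2 \times \R$ versus $\widetilde{\mathrm{SL}_2}$, resolved via the (non)vanishing of the Seifert Euler number. Your identification of that last case as the crux is accurate. For the purposes of the present paper, however, none of this is needed --- the result is simply quoted.
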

	
	\begin{corollary}[see {\cite[Theorem 5.3(ii)]{Scott1983}}]
	\label{thm:ScottSeifertMflds}
	If $M$ is a closed 3-manifold that admits a Seifert fibering, then $M$ does not admit a geometric structure
	modeled on $\sol$.
	\end{corollary}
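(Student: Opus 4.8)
The plan is to deduce this directly from Scott's classification of the geometries carried by Seifert fibered spaces, together with the uniqueness of geometry for closed $3$-manifolds recorded in Theorem~\ref{thm:ScottUniqueGeoms}. Suppose $M$ is a closed $3$-manifold which admits a Seifert fibering. The first step is to invoke \cite[Theorem~5.3]{Scott1983}, by which $M$ carries a geometric structure whose model geometry is one of the six that support Seifert fibrations, namely $S^3$, $\mathbb{E}^3$, $S^2 \times \mathbb{R}$, $\mathbb{H}^2 \times \mathbb{R}$, $\widetilde{\mathrm{SL}}_2$, or \textit{Nil}; in particular this geometry is not $\sol$. The second step is to note that if $M$ also admitted a geometric structure modeled on $\sol$, then $M$ would be a closed $3$-manifold carrying two distinct model geometries, contradicting Theorem~\ref{thm:ScottUniqueGeoms}. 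Hence $M$ admits no geometric structure modeled on $\sol$, which is the assertion.

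In this form the argument has essentially no obstacle: all of the content lies in the portion of \cite[Theorem~5.3]{Scott1983} that keeps $\sol$ off the list of geometries occurring for Seifert fibered manifolds. If one preferred to establish that last point without quoting the full classification, the cleanest route is group-theoretic. On one side, the fundamental group of a closed Seifert fibered $3$-manifold with infinite fundamental group contains a normal infinite cyclic subgroup, generated by a regular fiber. On the other side, a closed $\sol$-manifold has infinite fundamental group and is finitely covered by a torus bundle over $S^1$ whose monodromy is a matrix $A \in \mathrm{GL}_2(\mathbb{Z})$ with $|\operatorname{tr} A| > 2$; hence $\pi_1 M$ contains a finite-index subgroup $\Gamma$ fitting in an extension $1 \to \mathbb{Z}^2 \to \Gamma \to \mathbb{Z} \to 1$ in which a generator of the quotient acts on $\mathbb{Z}^2$ by conjugation via $A$.

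I would then check that $\Gamma$ has no normal infinite cyclic subgroup. A nontrivial cyclic subgroup of the $\mathbb{Z}^2$ is not $A$-invariant, since $A$ has no rational eigenvector, so it is not normal in $\Gamma$; and a cyclic subgroup generated by an element whose image in $\mathbb{Z}$ is a nonzero integer $n$ has normal closure containing $(I - A^n)\mathbb{Z}^2$, which is a finite-index --- hence rank-two --- subgroup of $\mathbb{Z}^2$ because $A$ has no eigenvalue on the unit circle. A hypothetical normal infinite cyclic subgroup of $\pi_1 M$ would meet $\Gamma$ in a normal infinite cyclic subgroup, a contradiction; so a closed $\sol$-manifold admits no Seifert fibering, which is the contrapositive of the statement. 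The one mild point needing care in this alternative route is the standard structural fact that every closed $\sol$-manifold is virtually a torus bundle over $S^1$ with Anosov monodromy.
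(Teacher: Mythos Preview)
The paper does not supply its own proof of this corollary; it simply records it as a consequence of \cite[Theorem~5.3(ii)]{Scott1983}, together with the uniqueness statement Theorem~\ref{thm:ScottUniqueGeoms}. Your first argument is precisely this deduction, so it matches the paper's intended approach.

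Your second, group-theoretic route is a genuine alternative and is correct. It trades the full Seifert-geometry classification for two more elementary structural facts: that closed Seifert spaces with infinite $\pi_1$ have a normal infinite cyclic subgroup, and that $\mathbb{Z}^2\rtimes_A\mathbb{Z}$ with $A$ Anosov has none. The computation of the normal closure via $(I-A^n)\mathbb{Z}^2$ is clean. The only small gap to patch explicitly is the finite-$\pi_1$ case, which you handle implicitly by noting that $\sol$-manifolds have infinite fundamental group; one sentence making this explicit would round it off. This alternative is more self-contained than quoting the full classification, at the cost of invoking the virtual torus-bundle structure of $\sol$-manifolds (which the paper in any case records as Theorem~\ref{thm:ScottSolvmflds}).
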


\subsection{Torus Bundles and Semi-Bundles}
	\label{sec:TSBdefs}

	By \emph{torus bundle} we mean a fiber bundle over $S^1$ whose fibers are tori.  This can also be viewed 
	as a quotient $T\times I / ((p,0) \sim (\phi(p),1))$ where $T$ is a torus and $\phi : T \to T$ is a 
	homeomorphism.  
	
	For each $i \in \{1,2\}$, let $N_i$ be either a twisted $I$-bundle over a torus or a Klein bottle, so that 
	$\bd N_i \hm T$.  A \emph{torus semi-bundle} $M = N_1 \cup_\phi N_2$ is obtained by gluing 
	$N_1$ and $N_2$ by a homeomorphism $\phi : \bd N_1 \to \bd N_2$.  Such a 3-manifold is orientable 
	if and only if both $N_1$ and $N_2$ are twisted $I$-bundles over a Klein bottle.
	
	If $M$ is a torus semi-bundle, at times we will refer to the \emph{middle torus} of $M$, which is the image
	of $\bd N_1$ and $\bd N_2$ after the gluing.  We will also make use of maps $\rho_i : \pi_1 N_i \to \Z/2$,
	which are the quotients of $\pi_1 N_i$ by the index two subgroup corresponding to the double covers of $N_i$ 
	by the product $T \times I$.  (This is sometimes called the \emph{monodromy} of the $I$-bundle $N_i$.)
	Notice that, for $b \in \pi_1 N_i$, $\rho_i(b)$ is trivial if and only if $b$ is 
	represented by a loop that is homotopic into $\bd N_i$.  Furthermore, when $N_i$ is a twisted $I$-bundle over 
	a torus (and is therefore nonorientable), $\rho_i$ coincides with the orientation character of $N_i$.  
	
	If $M$ is a torus semi-bundle, then there is a double cover of $M$ that is the union of the two $T \times I$ 
	double covers of $N_1$ and $N_2$ along their boundaries (via some homeomorphism of the torus).  This is a 
	torus bundle over a circle, and is in turn covered by $T \times \R$ with deck group $\Z$.  Hence $M$ is covered 
	by $T \times \R$ with deck group the \emph{infinite dihedral group} 
	$D = \ang{g_1, g_2 \mid g_1^2 = g_2^2 = 1}$.  
	The induced action
	on $\R$ is the usual discrete action of $D$ on $\R$, where $g_1$ and $g_2$ act by 
	reflections about $0$ and $1$, respectively.  The projection $T \times \R \to \R$ therefore induces a 
	projection $M \to I(2,2)$, where $I(2,2)$ is a 1-dimensional orbifold called the \emph{mirrored interval}.  
	(See \cite{Cooper2000} for definitions and notation.)  It follows that $M$ can be viewed as an \emph{orbifold 
	fiber bundle} over $I(2,2)$.  The generic fibers of this bundle are 2-sided tori in $M$, and the fibers over 
	the mirrored points are the 1-sided tori or Klein bottles of $M$.

\section{Classification of Compact 3-Manifolds Modeled on $\sol$}
	\label{sec:solvmflds}
	
In \cite{Scott1983}, Scott gives the following classification of closed $3$-manifolds modeled on $\sol$.
	(Note that a homeomorphism $\phi : T \to T$ of a torus is called \emph{hyperbolic} if $\phi_*$ acts on 
	$H_1(T;\Z)$ with $\mbox{tr}(T)^2 > 4$.)
	
\begin{theorem}[{\cite[Theorem 5.3(i)]{Scott1983}}]
	\label{thm:ScottSolvmflds}
	Let $M$ be a closed 3-manifold. Then $M$ possesses a geometric structure modeled on $\sol$ if and only if 
	$M$ is a finitely covered by a torus bundle over $S^1$ with hyperbolic monodromy.  In particular, $M$ itself 
	is either a bundle over $S^1$ with fibre the torus or Klein bottle or is the union of two twisted $I$-bundles
	over the torus or Klein bottle.
\end{theorem}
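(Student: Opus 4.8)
The plan is to run the argument through the structure of the isometry group $G := \mathrm{Isom}(\sol)$ and its lattices. Recall that $\sol$ itself is the identity component $G^0$, acting by left translations, that $G/G^0$ is finite (of order $8$), and that $\sol$ has a normal subgroup $V \cong \R^2$ --- the nilradical, the kernel of the projection $\sol \to \R$ recording the ``$z$''-coordinate --- which is preserved as a set by all of $G$; moreover $G$ acts on the quotient line $\sol/V \cong \R$ through the full group $\mathrm{Isom}(\R) \cong \R \rtimes \Z/2$. I take these facts as input (they are standard; see \cite{Scott1983}). I also use the algebraic classification of cocompact lattices in the solvable group $\sol$: such a lattice is isomorphic to $\Z^2 \rtimes_A \Z$ with $A \in \mathrm{GL}(2,\Z)$ satisfying $\mbox{tr}(A)^2 > 4$, and it meets $V$ in a lattice $\cong \Z^2$ (the standard fact that a lattice in a solvable Lie group meets the nilradical in a lattice of the nilradical).

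For the implication that the bundle condition gives a $\sol$-structure, I would first observe that if $\tilde M$ is a torus bundle over $S^1$ with hyperbolic monodromy $A$, then $\pi_1 \tilde M = \Z^2 \rtimes_A \Z$ embeds as a cocompact lattice in $\sol$: send $\Z^2$ to a lattice in $V \cong \R^2$ adapted to the real eigendirections of $A$, and a generator of the $\Z$-factor to the translation by $(0,0,\log\lambda)$, where $\lambda > 1$ is the larger eigenvalue of $A$; so $\tilde M$ is a $\sol$-manifold. If only a finite cover $\tilde M$ of $M$ is such a bundle, then $\pi_1 M$ is a torsion-free group containing the $\sol$-lattice $\pi_1\tilde M$ with finite index; a torsion-free finite extension of a $\sol$-lattice is again a $\sol$-lattice --- one checks it still acts freely and cocompactly on $\sol$, using Bieberbach-type rigidity on the $\Z^2 = \pi_1\tilde M \cap V$ subgroup --- so $M$ itself admits a $\sol$-structure.

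Conversely, write $M = \sol/\Gamma$ with $\Gamma \le G$ a torsion-free cocompact lattice. As $G/G^0$ is finite, $\Gamma_0 := \Gamma \cap \sol$ has finite index in $\Gamma$ and is a cocompact lattice in $\sol$, hence $\Gamma_0 \cong \Z^2 \rtimes_A \Z$ with $A$ hyperbolic, and $\sol/\Gamma_0$ is a torus bundle over $S^1$ with hyperbolic monodromy that finitely covers $M$; this gives the equivalence. For the ``in particular'' clause I then analyze $\Gamma$ directly. By the input above $\Gamma \cap V \cong \Z^2$, and it is normal in $\Gamma$ since $V$ is $G$-invariant. Let $Q$ be the image of $\Gamma$ in $\mathrm{Isom}(\sol/V) = \mathrm{Isom}(\R)$; it is discrete and cocompact there, so $Q \cong \Z$ or $Q \cong D_\infty$. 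If $Q \cong \Z$ then $M$ fibers over $\R/Q = S^1$ with fiber group $K := \ker(\Gamma \to Q)$, a $2$-manifold group containing $\Z^2$ with finite index, so $K \cong \Z^2$ or $K$ is the Klein bottle group, and $M$ is a torus- or Klein-bottle bundle over $S^1$. If $Q \cong D_\infty \cong \Z/2 * \Z/2$, pulling back the amalgam presentation gives $\Gamma \cong \Gamma_1 *_K \Gamma_2$ with $K$ as above and each $\Gamma_i$ a torsion-free group containing $K$ with index $2$; each such $\Gamma_i$ is the fundamental group of a twisted $I$-bundle $N_i$ over a torus or Klein bottle, and the amalgam realizes $M$ as $N_1 \cup N_2$ glued along the leaf corresponding to $K$ --- that is, a torus semi-bundle.

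The step I expect to be the main obstacle is making the ``in particular'' bookkeeping airtight: one must verify that the preimages $\Gamma_i$ really are $I$-bundle groups and not groups with torsion (this is exactly where torsion-freeness of $\Gamma$, equivalently $M$ being a manifold, enters), and that the algebraic amalgam $\Gamma_1 *_K \Gamma_2$ corresponds to a genuine geometric splitting of $M$ along an embedded incompressible torus; relatedly, the rigidity claim in the first direction --- a torsion-free finite extension of a $\sol$-lattice is again a $\sol$-lattice --- needs care. All of this is routine but somewhat involved, and the alternative is simply to invoke \cite[Theorem 5.3]{Scott1983} together with Theorem \ref{thm:ScottUniqueGeoms}.
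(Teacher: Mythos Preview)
The paper does not prove this theorem; it is quoted verbatim as \cite[Theorem~5.3(i)]{Scott1983} and used as a black box. So there is no ``paper's own proof'' to compare your proposal against --- the paper's treatment is precisely your closing alternative, namely to invoke Scott directly.

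That said, your sketch is a faithful outline of Scott's argument: one analyzes a cocompact torsion-free lattice $\Gamma \le \mathrm{Isom}(\sol)$, passes to $\Gamma \cap \sol$ to get the finite torus-bundle cover, and then reads off the bundle/semi-bundle dichotomy from whether the image of $\Gamma$ in $\mathrm{Isom}(\R)$ is $\Z$ or $D_\infty$. The points you flag as needing care --- that the index-$2$ overgroups $\Gamma_i$ are genuinely twisted $I$-bundle groups rather than having torsion, and that the algebraic amalgam is realized by a geometric splitting of $M$ --- are exactly the places where Scott's account does the work, and your identification of them is accurate. For the purposes of this paper none of that is needed: Theorem~\ref{thm:ScottSolvmflds} is input, and the paper's own contribution begins with the refinement in Theorem~\ref{thm:solvmfldClassification}.
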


We refine this classification as follows. 

\begin{theorem}
	\label{thm:solvmfldClassification}
	Let $M$ be a closed 3-manifold.  Then $M$ possesses a geometric structure modeled on $\sol$ if and only
	if one of the following holds:
	\begin{enumerate}
		\item $M$ is a torus bundle over $S^1$ with hyperbolic monodromy, or
		\item $M$ is an orientable torus semi-bundle with gluing map (in canonical coordinates) given by
			$\begin{pmatrix} r & s \\ t & u \end{pmatrix}$ where $rstu \ne 0$.
	\end{enumerate}
\end{theorem}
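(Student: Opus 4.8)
The plan is to go through Scott's classification as recalled in Theorem~\ref{thm:ScottSolvmflds}, which tells us two things: if $M$ admits a $\sol$ structure then $M$ is either a bundle over $S^1$ with torus or Klein bottle fiber, or a union of two twisted $I$-bundles over the torus or Klein bottle (a torus semi-bundle); and conversely $M$ admits a $\sol$ structure precisely when it is finitely covered by a torus bundle with hyperbolic monodromy. So it is enough to decide, within each of these families, which members are modeled on $\sol$. Klein bottle bundles are ruled out at once: the mapping class group of the Klein bottle is finite, so the monodromy is periodic, hence $M$ is finitely covered by $K \times S^1$ and in turn by $T^3$; then $\pi_1 M$ is virtually $\Z^3$, so $M$ is Euclidean and, by the uniqueness of geometric structures (Theorem~\ref{thm:ScottUniqueGeoms}), is not modeled on $\sol$. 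For a torus bundle $M$ with monodromy $\phi$: if $\phi$ is hyperbolic then $M$ is itself a torus bundle with hyperbolic monodromy, so the backward direction of Theorem~\ref{thm:ScottSolvmflds} applies directly; if $\phi$ is not hyperbolic then $M$ is Euclidean or is modeled on \textit{Nil}, so again by uniqueness it is not modeled on $\sol$. This accounts for alternative~(1).

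The core of the proof is the torus semi-bundle case $M = N_1 \cup_\phi N_2$. Here I would pass to the torus bundle double cover $W$ of $M$ described in Section~\ref{sec:TSBdefs}, obtained by gluing the product $I$-bundle double covers $\tilde N_i \cong T \times I$ along their boundaries; equivalently $W$ is the cover corresponding to the homomorphism $\pi_1 M \to \Z/2$ restricting to $\rho_i$ on each $\pi_1 N_i$ (these restrictions agree on $\pi_1$ of the middle torus, since each $\rho_i$ kills it). Because $M$ is modeled on $\sol$ if and only if its double cover $W$ is --- one direction by lifting the structure, the other because $W$, and hence $M$, is then finitely covered by a hyperbolic-monodromy torus bundle by Theorem~\ref{thm:ScottSolvmflds} --- the problem reduces to computing the monodromy $\Psi$ of $W$, as an automorphism of $H_1$ of the fiber torus, and deciding when $\mathrm{tr}(\Psi)^2 > 4$. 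Traversing the base circle of $W$ crosses $N_1$ and then $N_2$, and a direct computation with fundamental groups shows (up to conjugacy) that $\Psi = \Phi\, D_1\, \Phi^{-1} D_2$, where $\Phi = \begin{pmatrix} r & s \\ t & u \end{pmatrix}$ is the matrix of the gluing map in the canonical coordinates of the statement and $D_i$ records the action on the boundary torus of the ``reflection'' deck transformation $\sigma_i$ of $\tilde N_i \to N_i$. The key point is that $D_i$ is an order-two matrix of determinant $-1$ when $N_i$ is the orientable twisted $I$-bundle over a Klein bottle, while $D_i$ is the identity when $N_i$ is the twisted $I$-bundle over a torus, because then $\sigma_i$ is a translation and acts trivially on homology.

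If $M$ is not orientable then at least one $N_i$ is a twisted $I$-bundle over a torus, so the corresponding $D_i$ is trivial; then $\Psi$ is conjugate to the identity or to an order-two matrix, so $W$ is Euclidean, and (since a $\sol$ structure on $M$ would lift to $W$, contradicting uniqueness) $M$ is not modeled on $\sol$. If $M$ is orientable then in the canonical coordinates both $D_i$ equal $\begin{pmatrix} 1 & 0 \\ 0 & -1 \end{pmatrix}$, and a short matrix computation gives $\det \Psi = 1$ and $\mathrm{tr}(\Psi) = \pm 2(ru + st)$. Hence $\mathrm{tr}(\Psi)^2 > 4$ if and only if $|ru + st| \ge 2$; and since $ru - st = \det \Phi = \pm 1$ forces $ru$ and $st$ to have the same sign whenever both are nonzero, an elementary case check shows $|ru + st| \ge 2$ exactly when $rstu \ne 0$. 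Putting the cases together, $W$ is a hyperbolic-monodromy torus bundle if and only if $M$ is orientable and $rstu \ne 0$, which by the reduction above is equivalent to $M$ being modeled on $\sol$; this is alternative~(2). The step I expect to be the main obstacle is deriving the monodromy formula $\Psi = \Phi D_1 \Phi^{-1} D_2$ and getting the bookkeeping right: one must track how the gluing map conjugates between coordinates on the two boundary tori, how it composes with the two $I$-bundle deck transformations, and fix a normalization of the canonical coordinates so that each $D_i$ takes the stated form. Everything else is routine, given the two cited results of Scott.
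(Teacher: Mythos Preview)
Your proposal is correct and takes a genuinely different route from the paper for the semi-bundle case. The paper disposes of the orientable torus semi-bundles by citing \cite{Sun2010} for the characterization $rstu\ne 0$, and rules out nonorientable torus semi-bundles by exhibiting an explicit Seifert fibration (foliating the twisted $I$-bundle over a torus by circles of any chosen slope so as to match a fibration of the other piece), then invoking Corollary~\ref{thm:ScottSeifertMflds}. Your approach is more uniform and self-contained: you pass to the torus-bundle double cover $W$, compute its monodromy $\Psi$ (up to conjugacy) as $\Phi D_1\Phi^{-1}D_2$, and then read off both conclusions from the trace --- the nonorientable case collapses because the relevant $D_i$ is the identity (so $\Psi$ has finite order), while in the orientable case $\mathrm{tr}\,\Psi=\pm 2(ru+st)$ and the elementary parity argument with $ru-st=\pm 1$ gives hyperbolicity iff $rstu\ne 0$. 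What this buys you is that you never need the external reference or the Seifert-fibering construction; what the paper's argument buys is that it avoids the monodromy bookkeeping you flag as the main obstacle.

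Two small points. First, Scott's list also allows semi-bundles whose middle fiber is a Klein bottle (coming from twisted $I$-bundles over the Klein bottle with Klein-bottle boundary); the paper treats these as a separate case, observing they are double covered by Klein bottle bundles and hence Euclidean. Your write-up silently folds everything into ``torus semi-bundle'' in the sense of Section~\ref{sec:TSBdefs}, so you should add one sentence dispatching Klein bottle semi-bundles by the same finite-cover argument you already use for Klein bottle bundles. Second, the assertion ``$M$ is Euclidean'' for Klein bottle bundles is a little stronger than you need: it suffices (and is cleaner) to say that a $\sol$ structure on $M$ would lift to the $T^3$ cover, contradicting Theorem~\ref{thm:ScottUniqueGeoms} --- which is exactly the contrapositive the paper uses.
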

	The notion of \textit{canonical coordinates} on the middle torus of a torus semi-bundle is explained in the 
	definition that precedes Proposition 1.5 of \cite{Sun2010}.  
\begin{proof}
	It is shown in \cite{Sun2010} that an orientable torus semi-bundle admits a $\sol$ structure if and only
	if its gluing map is of the form stated above.  Hence to complete the proof we must show that the other types 
	of 3-manifolds mentioned in Scott's classification do \emph{not} admit geometric structures modeled on $\sol$.  
	
	\textsc{Case 1.} $M$ is a Klein bottle bundle over $S^1$.  Let
	\[
		B = \ang{a,b \mid aba\inv b = 1}
	\]
	be the fundamental group of a Klein bottle, and let $A = \ang{a^2, b} \iso \Z \oplus \Z$ be the 
	normal subgroup of $B$ corresponding to the double cover of the Klein bottle by a torus.  The fundamental group 
	of $M$ has the form
	\[
		\pi_1M = \ang{B, t \mid txt\inv = \phi(x),\; \forall x \in B}
	\]
	for some automorphism $\phi$ of $B$ coming from a homeomorphism of the Klein bottle.  
	
	We now show that every such automorphism of $B$ preserves the subgroup $A$.  We first observe that
	every element of $B$ can be written uniquely as $a^i b^j$ for $i, j \in \Z$.  Since $\phi$ must
	preserve the commutator subgroup $[B,B] = \ang{b^2}$, we have $\phi(b^2) = b^{\pm 2}$, and a short
	computation shows that in fact $\phi(b) = b^{\pm 1}$.  It follows that $\phi(a) = a^i b^j$ where
	$i, j \in \Z$ and $i$ is odd, since otherwise $\phi$ has image in the proper subgroup $A$.  We have
	\[	
		\phi(a^2) = (a^i b^j)(a^i b^j) = (a^i a^i)(b^{-j}b^j) = a^{2i},
	\]
	and similarly $\phi\inv(a^2) = a^{2i'}$ for some $i' \in \Z$.  From 
	$a^2 = \phi\inv(\phi(a^2)) = a^{2i \cdot i'}$ we find that $i \cdot i' = 1$, and so $i = \pm 1$.  In summary, 
	$\phi(b) = b^{\pm 1}$ and $\phi(a^2) = a^{\pm 2}$,
	so $\phi$ preserves the subgroup $A$.  
	
	We therefore conclude that $\pi_1 M$ contains an index-2 subgroup of the form
	\[
		H = \ang{A, t \mid txt\inv = \phi|_A(x),\; \forall x \in A}.
	\]	
	Let $\hat M$ be the double cover of $M$ corresponding to $H$, which is a torus bundle over $S^1$ with 
	monodromy $\phi|_A$.  	
	By the argument in the previous paragraph, there is a choice of basis for $A$ so that 
	\[
		\phi|_A = \begin{pmatrix} \pm 1 & 0 \\ 0 & \pm 1 \end{pmatrix}.
	\]
	Therefore $\phi|_A$ corresponds to a periodic homeomorphism of the torus, and so $\hat M$ admits a Euclidean 
	structure by \cite[Theorem 5.5]{Scott1983}.  It follows that $M$ is not does not admit a $\sol$ structure,
	for if it did the structure could be lifted to a $\sol$ structure on $\hat M$, which would violate 
	Theorem \ref{thm:ScottUniqueGeoms}.
	
	\textsc{Case 2.}  $M$ is a Klein bottle semi-bundle.  Then $M$ is double covered by a Klein bottle bundle over 
	$S^1$ and therefore has a degree-4 cover that is a torus bundle over $S^1$ that admits a Euclidean structure.
	As in the previous case, $M$ does not admit a $\sol$ structure.
	
	\textsc{Case 3.} $M$ is a nonorientable torus semi-bundle.  Then $M$ is the union of two twisted $I$-bundles
	$N_1$ and $N_2$ over a torus or Klein bottle, at least one of which (say $N_1$) is an $I$-bundle over a torus.  
	We will show that $M$ admits a Seifert fibering, and therefore does not admit a $\sol$ 
	structure by Corollary \ref{thm:ScottSeifertMflds}. 
	
	Choose an arbitrary Seifert fibration for $N_2$; up to isomorphism there are precisely two of these 
	when $N_2$ is an $I$-bundle over a Klein bottle (see \cite{Hatcher3mflds}, for instance) and infinitely many 
	when $N_2$ is an $I$-bundle over a torus, as we will show.  
	
	If $T$ is a torus, then for any $p/q \in \Q \cup \{\infty\}$, $T$ can be foliated by $p/q$-curves.  This
	foliation extends to the product Seifert fibration of $T \times I$ by $p/q$-curves in each torus 
	$T \times \{t\}$.  
	Finally, since the covering involution corresponding to the cover $T\times I \to N_1$ preserves the fibration 
	on $T \times I$, it descends to a Seifert fibration of $N_1$ so that $\bd N_1$ is foliated by $p/q$ curves.
	Note that this is the one of the ``generalized'' Seifert fibrations as defined in \cite{Scott1983}, as
	the critical fibers are not isolated.  In fact, the one-sided torus in $N_1$ forms a subsurface of critical 
	fibers.
	
	It follows that a Seifert fibration on $M$ can be constructed by choosing a Seifert fibration on $N_1$ so
	that the foliation of the boundary agrees with the image of the foliation of $\bd N_2$ under the gluing
	map.  
\end{proof}

\section{Torus Bundles}
	\label{sec:TBs}

The first of the two main theorems that will imply Theorem $\ref{thm:SLCforSolvmflds}$ is the following.

\begin{theorem}
	\label{thm:SLCforTBs}
	If $M$ is a torus bundle, then the Simple Loop Conjecture holds for $M$.
\end{theorem}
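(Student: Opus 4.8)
The plan is to begin with two reductions. First, if the monodromy $\phi$ of $M$ is not hyperbolic then $M$ is a Euclidean or $\textit{Nil}$ manifold --- according as $\phi$ has finite order or is parabolic --- hence Seifert fibered, so the conjecture holds by Hass's theorem \cite{Hass1987}; thus we may assume $\phi$ is hyperbolic, so that $\pi_1 M = A \rtimes_\phi \Z$ with $A \cong \Z^2$ the fiber subgroup and $\phi \in GL_2(\Z)$ having no eigenvalue $\pm 1$ and no rational eigenvector. Second, let $F : \S \to M$ be a $2$-sided immersion with $F_*$ not injective. If $\S = S^2$ there is nothing to prove; if $\S = \R P^2$ then $F_*$ is trivial (its image is a finite subgroup of the torsion-free group $\pi_1 M$), which contradicts $2$-sidedness, so no such $F$ exists; if $\S = T^2$ then $\ker F_*$ is a nontrivial subgroup of $\Z^2$ with torsion-free quotient, hence a primitive subgroup, and a primitive generator of it is carried by an essential simple closed curve. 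So assume $\S$ is none of $S^2, \R P^2, T^2$; then $\S$ contains an essential \emph{separating} simple closed curve.

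Next I would split on the image $Q = F_*(\pi_1\S) \le \pi_1 M$. If $Q$ is abelian, then $F_*$ factors through $H_1\S$, so $\ker F_*$ contains $[\pi_1\S,\pi_1\S]$, which contains a separating --- hence nullhomologous, hence essential --- simple loop; done. If $Q$ is nonabelian, I claim $Q$ has finite index in $\pi_1 M$: setting $Q_0 = Q \cap A$, the quotient $Q/Q_0$ embeds in $\pi_1 M/A \cong \Z$ and so is cyclic; if $Q_0$ had rank $\le 1$, then either $Q$ would be cyclic or an element of $Q$ projecting to a generator of $Q/Q_0$ would conjugate a generator $w$ of $Q_0$ to $\pm w$, forcing $w$ to be a rational eigenvector of a power of $\phi$ with eigenvalue $\pm 1$ --- impossible. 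Hence $Q_0$ has finite index in $A$ and $Q$ has finite index in $\pi_1 M$. Let $\hat M \to M$ be the corresponding finite cover; it finitely covers a $\sol$ manifold and has $\pi_1\hat M = Q \cong \Z^2 \rtimes \Z$, so it is again a torus bundle over $S^1$ with hyperbolic monodromy. Lifting $F$ to $\hat F : \S \to \hat M$ and noting $\ker\hat F_* = \ker F_*$, we may henceforth assume $F_* : \pi_1\S \to \pi_1 M$ is onto.

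For the main step, let $p : M \to S^1$ be the bundle projection and $T$ a fiber. Then $(p\circ F)_* : \pi_1\S \to \Z$ is onto, so $[p\circ F] \in H^1(\S;\Z)$ is primitive; after a homotopy of $F$ --- lifted from a homotopy of $p\circ F$ through the fibration $p$ --- I may take $p\circ F$ to be the standard map representing this class, so that $c_0 := F\inv(T) = (p\circ F)\inv(\mathrm{pt})$ is a single $2$-sided non-separating simple closed curve. I claim $V := \S\mless c_0$ is orientable: this is automatic if $\S$ is orientable, and if $\S$ is nonorientable then $M$ is nonorientable, its orientation character $\rho_M$ is the projection $\pi_1 M \to \Z$ reduced mod $2$, and $2$-sidedness gives $[p\circ F]\bmod 2 = \rho_M\circ F_* = \rho_\S = w_1(\S)$, so $[c_0]\bmod 2$ is Poincar\'e dual to $w_1(\S)$, every loop in $\S\setminus c_0$ is orientation-preserving, and $V$ is orientable. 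Now $V$ is an orientable surface with two boundary circles, and $F$ carries $V$ into $M\setminus T \cong T^2\times I$, so $\psi := F_*|_{\pi_1 V}$ takes values in the fiber subgroup $A$. Write $\pi_1\S = \ang{\pi_1 V,\ s \mid s\,\ell_1\,s\inv = \ell_2}$ for the HNN decomposition of $\pi_1\S$ along $c_0$, with $\ell_1, \ell_2 \in \pi_1 V$ the two boundary loops; since $V$ is orientable, $[\ell_1] = \pm[\ell_2]$ in $H_1 V$, hence $\psi(\ell_1) = \pm\psi(\ell_2)$. As $F_*(s) = t^{\pm 1}a$ for some $a \in A$, applying $F_*$ to the relation gives $\phi^{\pm 1}(\psi(\ell_1)) = \psi(\ell_2) = \pm\psi(\ell_1)$, so $\psi(\ell_1)$ is an eigenvector of $\phi$ with eigenvalue $\pm 1$ and therefore $\psi(\ell_1) = 0$. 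Thus $F_*(\ell_1) = 1$; since $\ell_1$ is freely homotopic to the non-separating --- hence essential --- curve $c_0$, we conclude $c_0 \in \ker F_*$, completing the proof.

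The step I expect to demand the most care is the homology-and-orientation bookkeeping of the final paragraph: verifying that the standard-map preimage of a primitive class has orientable complement (which is the one place $2$-sidedness is essentially used), and pinning down the sign relation $[\ell_1] = \pm[\ell_2]$ in $H_1(\S\mless c_0)$ together with the matching form of the HNN relation, so that the eigenvalue computation goes through uniformly for orientable and nonorientable $\S$. The remaining ingredients --- the reduction to hyperbolic monodromy via Hass, the small-surface and abelian-image cases, and the subgroup argument that a nonabelian image has finite index --- are routine.
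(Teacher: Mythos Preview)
Your argument is correct, and it takes a genuinely different route from the paper's. The paper does \emph{not} reduce to hyperbolic monodromy: it proves the result uniformly for every torus bundle. After arranging (via the same ``connected preimage of a point in $S^1$'' lemma you use) that $c_0 = F^{-1}(\text{fiber})$ is a single $2$-sided curve, the paper observes that $\Sigma \mless c_0$ is orientable and hence, when $\chi(\Sigma)<0$, contains an embedded once-punctured torus; the boundary $\beta$ of that punctured torus is an essential \emph{separating} simple loop whose image is a commutator in $\pi_1(T\times I)\cong\Z^2$, so $F_*[\beta]=1$. The torus case is quoted from Hass, and the Klein bottle case is handled by a separate short proposition.

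The main contrast is therefore: the paper produces a \emph{separating} curve in $\ker F_*$ by a commutator trick that uses only that the fiber group is abelian, while you produce the \emph{non-separating} curve $c_0$ itself by an eigenvalue argument that uses the absence of $\pm 1$--eigenvectors for a hyperbolic $\phi$. What the paper's approach buys is self-containment: it never needs Hass's theorem, and it yields the slightly stronger unconditional statement that every $2$-sided map from a surface with $\chi<0$ to \emph{any} torus bundle kills a simple loop (and that no $2$-sided map exists when $\chi$ is odd). What your approach buys is a more specific conclusion in the hyperbolic case (the canonical fiber-preimage is already in the kernel) and a uniform treatment of the Klein bottle within the same dichotomy (abelian image versus finite-index image), rather than as a separate proposition. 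The part you flagged as needing care---the orientation bookkeeping showing $\Sigma\mless c_0$ is orientable and the $\pm$ sign in $[\ell_1]=\pm[\ell_2]$---is indeed where the only real work lies, and your outline handles it correctly.
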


In fact, a slightly stronger result holds for most surfaces.

\begin{theorem}
	\label{thm:SLCforTBs_strong}
	Let $\S$ be a closed surface and let $M$ be a torus bundle. If $\chi(\S)$ is even and negative
	and $F : \S \to M$ is a 2-sided map, then there is a essential simple loop in $\S$ that represents an element 
	of $\ker F_*$.  If $\chi(\S)$ is odd then there is no $2$-sided map $\S \to M$.
\end{theorem}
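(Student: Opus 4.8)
The plan is to deduce both assertions from one observation about the bundle projection $g\colon M\to S^1$, whose fiber $T$ is a torus. Recall that $\pi_1 M$ fits in an extension $1\to\pi_1 T\to\pi_1 M\to\Z\to 1$ with $\pi_1 T\cong\Z^2$, so $\pi_1 T=\ker g_*$; and note that the orientation character $\rho_M$ vanishes on $\pi_1 T$, because a loop in the orientable surface $T$ has an orientable neighbourhood in $M$ (a fiber of a bundle has trivial normal bundle). Hence $\rho_M$ factors through $g_*$, i.e.\ $w_1(M)$ is pulled back along $g$ from a class on $S^1$. For the positive assertion it suffices to produce an embedded one-holed torus $W\subset\Sigma$ with $\partial W$ essential in $\Sigma$ and $F_*(\pi_1 W)\subseteq\pi_1 T\subseteq\pi_1 M$: writing $\pi_1 W=\langle x,y\rangle$, the curve $\partial W$ represents $[x,y]$, and $F_*[x,y]=[F_*x,F_*y]=1$ since $F_*x,F_*y$ lie in the abelian group $\Z^2$; thus $\partial W$ is an essential simple loop in $\ker F_*$.

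\textbf{The case $\chi(\Sigma)$ odd.} Such a $\Sigma$ is non-orientable, so $w_1(\Sigma)\neq 0$, and Wu's formula $w_2(\Sigma)=w_1(\Sigma)^2$ gives $\langle w_1(\Sigma)^2,[\Sigma]\rangle=\langle w_2(\Sigma),[\Sigma]\rangle=\chi(\Sigma)\bmod 2=1$ in $\Z/2$. If $F$ were $2$-sided then $w_1(\Sigma)=F^*w_1(M)$, so $w_1(\Sigma)^2=F^*\bigl(w_1(M)^2\bigr)$; but $w_1(M)^2$ lies in the image of $H^2(S^1;\Z/2)=0$ by the previous paragraph, so $w_1(\Sigma)^2=0$ — a contradiction. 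Hence no $2$-sided map $\Sigma\to M$ exists.

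\textbf{The case $\chi(\Sigma)$ even and negative, $F$ $2$-sided.} Homotope $F$ so that $gF$ is transverse to a regular value $q\in S^1$ with $L:=(gF)^{-1}(q)$ having as few components as possible; then $F$ restricts to a map from $\Sigma\mless L$ into $M$ cut along $T:=g^{-1}(q)$, which is homeomorphic to $T\times I$. Each component of $L$ is two-sided (its normal bundle is pulled back from the point $q$) and, by minimality, essential in $\Sigma$, so the components of $\Sigma\mless L$ are incompressible; moreover, after the minimization $L$ is either empty or a disjoint union of parallel copies of a single essential non-separating curve $c$, so $\Sigma\mless L$ is the disjoint union of $\Sigma\mless c$ and some annuli. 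Write $P=\Sigma\mless c$ (or $P=\Sigma$ if $L=\emptyset$): it is connected, incompressible in $\Sigma$, has at most two boundary circles, and $\chi(P)=\chi(\Sigma)$. Every loop in $P$ meets $L$ trivially, so lies in $\ker(g_*F_*)$; since $F$ is $2$-sided, $\rho_\Sigma=\rho_M\circ F_*$ factors through $g_*F_*$, whence $\ker(g_*F_*)\subseteq\ker\rho_\Sigma$ and therefore $P$ is \emph{orientable}. Because $\chi(\Sigma)$ is even and at most $-2$, the orientable surface $P$ has genus at least $1$, so it contains an embedded one-holed torus $W$ with $\partial W$ essential in $\Sigma$. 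Finally $F(W)\subseteq F(\Sigma\mless L)$ lies in $M$ cut along $T$, i.e.\ in a copy of $T\times I$, so $F_*(\pi_1 W)\subseteq\pi_1 T=\Z^2$, and $W$ is as required.

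The content is concentrated in the middle of the even case: $2$-sidedness is exactly the hypothesis forcing the pieces of $\Sigma$ cut along $L$ to be orientable, and this is simultaneously what lets the Euler-characteristic count produce a positive-genus (hence one-holed-torus-bearing) piece when $\chi(\Sigma)$ is even, and what obstructs $2$-sidedness when $\chi(\Sigma)$ is odd. The remaining inputs — reducing $\abs{L}$ to a family of parallel curves, incompressibility of the cut pieces, and the elementary fact that a positive-genus surface contains a one-holed torus with essential boundary — are routine surface topology; I expect the parallel-curves normal form (equivalently, the identification of the unique non-annular cut piece, of Euler characteristic $\chi(\Sigma)$) to be the only point requiring a careful write-up.
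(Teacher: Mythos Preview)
Your overall strategy --- cut $\Sigma$ along $L=(gF)^{-1}(q)$, observe that the pieces map into $T\times I$ and are therefore orientable, and extract a one-holed torus whose boundary becomes a commutator in $\Z^2$ --- is exactly the paper's. Your treatment of the odd-$\chi$ case via $w_1(\Sigma)^2=w_2(\Sigma)$ is a clean alternative: the paper instead derives it as a byproduct of the even case, since once $\SmL$ is shown to be orientable with two boundary circles its Euler characteristic is automatically even.

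The gap you flagged in the parallel-curves normal form is real, and it is not just a matter of careful write-up: minimizing $\abs{L}$ does \emph{not} force the components of $L$ to be parallel. What minimization gives (via the co-orientation argument you would have to spell out) is that every component of $\SmL$ has exactly two boundary circles, one co-oriented in and one out, so the dual graph is a single cycle; but the pieces along that cycle may each carry positive genus. For example, if $(gF)_*$ has image $2\Z$ and $\Sigma$ has genus $3$, the minimal $L$ has two components and both complementary pieces can be genus-one surfaces with two boundary circles --- no annuli at all, and no single piece with $\chi=\chi(\Sigma)$.

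The paper sidesteps this by splitting on the image of $(gF)_*$: if it is all of $\Z$, Lemma~\ref{lem:MapsToCircle} forces $\abs{L}=1$; if it is $n\Z$ with $n\ge 2$, one lifts $F$ to the degree-$n$ torus-bundle cover of $M$ and lands in the surjective case; if it is trivial, one homotopes $F$ into a single fibre. There is, however, an easier repair that stays within your outline and avoids the case split: you do not actually need the curves to be parallel. Since every component of $\SmL$ is orientable with exactly two boundary circles, each has even nonpositive Euler characteristic, and these sum to $\chi(\Sigma)\le -2$; hence \emph{some} component $P$ has $\chi(P)\le -2$, i.e.\ genus at least one, and your one-holed-torus argument applies verbatim to that $P$.
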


After we prove Theorem \ref{thm:SLCforTBs_strong}, to complete the proof of Theorem
	\ref{thm:SLCforTBs} it will remain to handle the two cases where $\chi(\S) = 0$.  The Simple Loop
	Conjecture is known to hold for maps $\S \to M$ where $\S$ is a torus and $M$ is any 3-manifold 
	\cite[Section 4.4]{Hass1987}, and Proposition \ref{prop:SLCforKleinbottles} will deal with the case in which 
	$\S$ is a Klein bottle.

Let $L$ be a (not necessarily connected) 1-submanifold of a surface $\S$ and let $\alpha$ be an arc in $\S$ 
	with endpoints on $L$ and interior disjoint from $L$.  Then \emph{surgery} of $L$ along $\alpha$
	entails fattening $\alpha$ to a strip $I \times I$ with $L \cap (I \times I) = \bd I \times I$, deleting 
	the interior of $\bd I \times I$ from $L$, and gluing in $I \times \bd I$ to $L$.  Notice that
	if $\alpha$ is an arc between two distinct components of $L$, then the result of surgery along $\alpha$ is to 
	connect the two components of $L$ by a bridge, as shown in Figure \ref{fig:loop_join}.
	\begin{figure}[t]
		\begin{center}
			\begin{overpic}[width=0.32\textwidth]{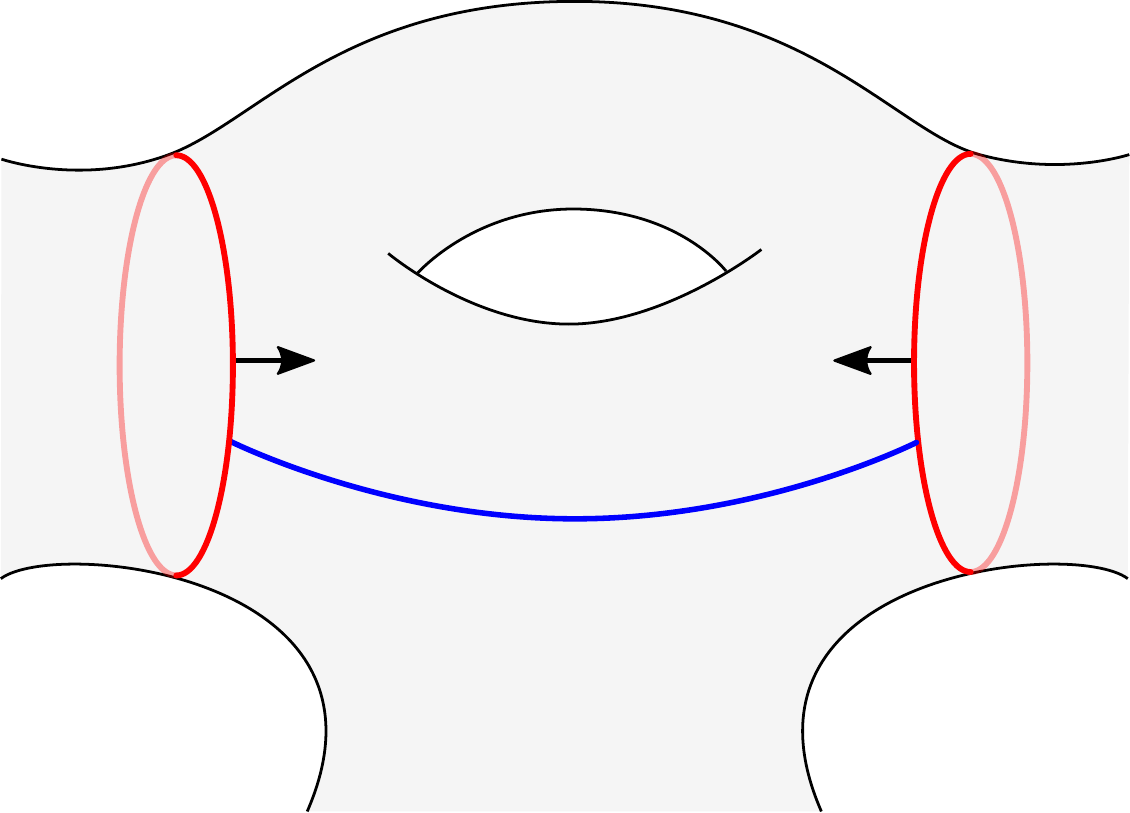}
	 			\put (30,23) {$\alpha$}
				\put (48, 3) {$\S$}
				\put (14,61) {$L$}
				\put (85,61) {$L$}
			\end{overpic}
			\SNAKE{1.7}
			\begin{overpic}[width=0.32\textwidth]{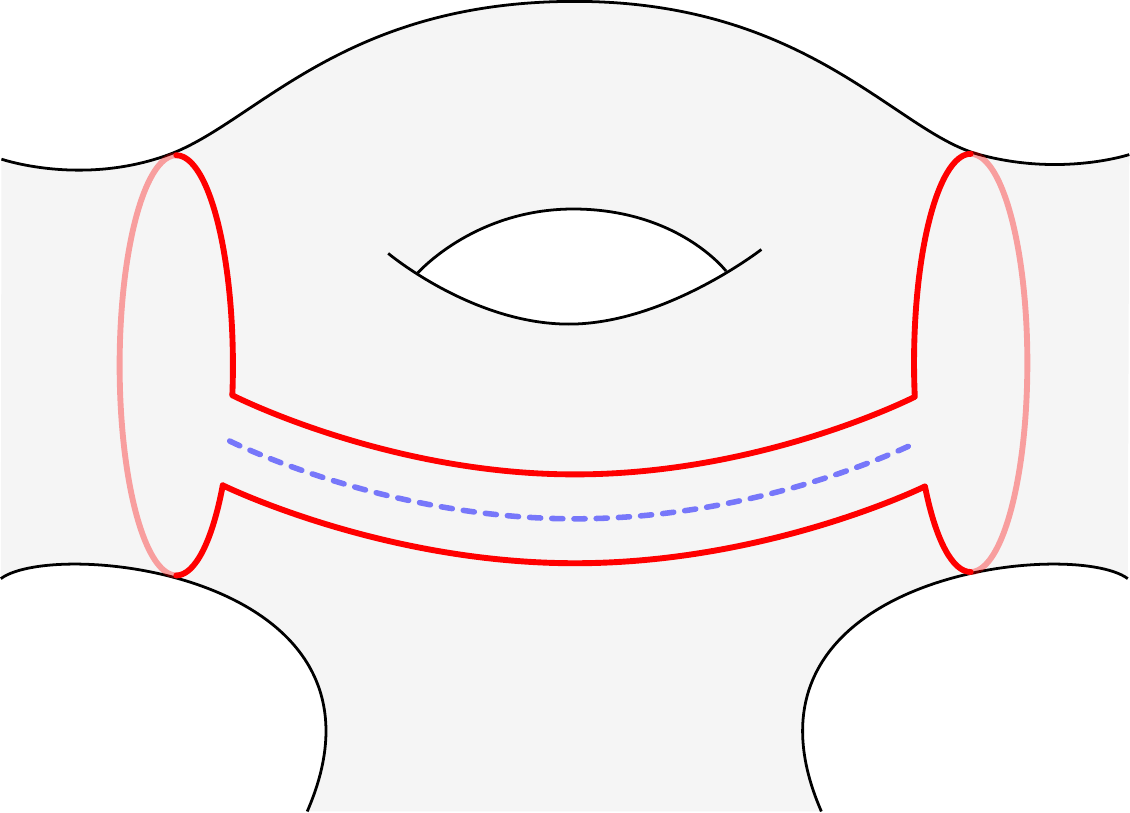}
				\put (48, 3) {$\S$}
				\put (14,61) {$L'$}
			\end{overpic}
		\end{center}
		\caption{Surgery along $\alpha$ reduces the number of components of $L$ by one.}
		\label{fig:loop_join}
	\end{figure}
	
The following can be established by a standard homomtopy argument.

\begin{lemma}
	\label{lem:MapsToInterval}
	Let $\S$ be a (not necessarily closed) surface, 
	let $J$ denote the open interval $(0,1)$, and let $H : \S \to J$ be a map that 
	is transverse to a point $r \in J$.  If $\alpha$ is an arc that connects two components
	of $L = H\inv(r)$ whose interior is disjoint from $L$, then $H$ can be homotoped in a neighborhood of 
	$\alpha$ so that the preimage of $r$ changes by surgery along $\alpha$.
\end{lemma}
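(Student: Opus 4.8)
The plan is to perform the homotopy inside a small disk neighborhood of $\alpha$, where transversality of $H$ to $r$ gives complete control over the level set $L$. Write $p_0,p_1$ for the endpoints of $\alpha$ and $L_0,L_1$ for the (distinct) components of $L$ containing them. Since $\mathrm{int}(\alpha)$ is connected and disjoint from $L$, it lies in a single component of $\Sigma\setminus L$; I will assume $H>r$ there, the opposite case being handled the same way after interchanging the roles of the two sides of $L$. First I would fix a disk neighborhood $U$ of $\alpha$, chosen small enough that $U\cap L$ consists of exactly two properly embedded arcs $\lambda_i\subset L_i$ with $p_i\in\lambda_i$. These two arcs cut $U$ into three subsurfaces; the one whose closure meets both $\lambda_0$ and $\lambda_1$ contains $\mathrm{int}(\alpha)$ — call it $U_M$ — and transversality forces $H>r$ on $U_M$ and $H<r$ on the two outer pieces $U_L,U_R$.

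Next I would fatten $\alpha$ into a surgery strip $\Sigma_0=I\times I$ lying in the interior of $U$, with $\Sigma_0\cap L=\partial I\times I$ and $\alpha$ the core $I\times\{1/2\}$, and let $L'$ be the result of surgering $L$ along $\alpha$ in this strip. Because surgery alters $L$ only inside $\Sigma_0$, the $1$-manifolds $L$ and $L'$ meet $\partial U$ in the same four points, and $L'\cap U$ again cuts $U$ into three pieces: two thin regions $W_+,W_-$ running alongside the two boundary arcs of $\partial U$ that belong to $\partial U_M$ (where $H>r$), and one large region $W_0\supset\alpha$ running alongside the boundary arcs belonging to $\partial U_L$ and $\partial U_R$ (where $H<r$). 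I would then define $H'\colon\Sigma\to J$ by declaring $H'=H$ outside $U$, $H'=r$ on $L'\cap U$, $H'>r$ on $W_+\cup W_-$ and $H'<r$ on $W_0$, interpolating so that $H'$ is continuous, agrees with $H$ on $\partial U$, and is transverse to $r$ near $L'$. The one point requiring care — and the only place the hypothesis on $\alpha$ enters — is the compatibility of these sign prescriptions with the prescribed values $H|_{\partial U}$: it holds precisely because the coorientation of $L$ is consistent all along $\alpha$ (a consequence of $\mathrm{int}(\alpha)$ lying on a single side of $L$), so that no arc of $\partial U$ is required to bound regions of opposite sign.

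Finally, $H$ and $H'$ agree on $\partial U$ and outside $U$ and both take values in the interval $J$, so the straight-line homotopy $H_t=(1-t)H+tH'$ stays in $J$ by convexity, is supported in $U$, and carries $H$ to $H'$; since $(H')^{-1}(r)=L'$, this is the desired homotopy supported in a neighborhood of $\alpha$. The main pitfall to avoid is the temptation to homotope $H$ rel the surgery strip $\Sigma_0$ itself: this is impossible, because $L$ and $L'$ differ on $\partial\Sigma_0$, so the homotopy must sweep the image of the strip — and in particular the image of $\alpha$ — across the level $r$. Working in the slightly larger disk $U$, on whose boundary $L$ and $L'$ do agree, is exactly what makes room for this.
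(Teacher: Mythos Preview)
Your argument is correct. The paper does not actually prove this lemma; it states only that ``the following can be established by a standard homotopy argument'' and leaves it at that. What you have written is precisely such a standard argument, carried out carefully: choosing a disk neighborhood $U$ of $\alpha$ in which $L\cap U$ is a pair of arcs, defining $H'$ piecewise on the three complementary regions of $L'\cap U$ so as to match $H$ on $\partial U$, and then taking the straight-line homotopy (which stays in $J$ by convexity). Your observation that the sign prescriptions on the three $L'$-regions are compatible with $H|_{\partial U}$ exactly because $\mathrm{int}(\alpha)$ lies on a single side of $L$ --- so that the pulled-back coorientations of $L_0$ and $L_1$ both point toward $\alpha$ --- is the key point, and you have identified it correctly. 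So there is nothing to compare: you have simply supplied the details the paper chose to omit.
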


\begin{lemma}
	\label{lem:MapsToCircle}
	Let $\S$ be a closed surface, let $G : \S \to S^1$ be a $\pi_1$-surjective map, and choose $q \in S^1$.  
	Then $G$ can be homotoped so that the preimage $L = G\inv(q)$ is a essential 2-sided simple loop in $\S$.
\end{lemma}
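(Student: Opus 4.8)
The plan is to homotope $G$ until $L := G^{-1}(q)$ is a single essential simple loop, starting from an arbitrary closed $1$-submanifold. First I would homotope $G$ to be transverse to $q$, so that $L$ is a closed $1$-manifold; it is automatically $2$-sided, since $G^{-1}$ of a small arc about $q$ is a product neighborhood $L \times (-\epsilon,\epsilon)$ of $L$ on which $G$ is the projection. Moreover $L$ is nonempty: otherwise $G$ would factor through the contractible set $S^1 \setminus \{q\}$, making $G_*$ trivial, which contradicts $\pi_1$-surjectivity (a property preserved under homotopy). The reduction alternates two moves — deleting a nullhomotopic component, and fusing two components — each decreasing the number of components of $L$, until only one remains.

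To delete a nullhomotopic component: such a component $c$ bounds a disk, and choosing an innermost one $D$ gives $\mathrm{int}(D) \cap L = \emptyset$, so $G(\mathrm{int}\,D) \subseteq S^1 \setminus \{q\}$. Slightly enlarging $D$ to an open disk $U$ with $\overline U \cap L = c$ and $q \notin G(\partial U)$, I would homotope $G$ on $\overline U$, rel $\partial U$, to a map into $S^1 \setminus \{q\}$: such a map exists (cone off $G|_{\partial U}$, which already takes values in the contractible set $S^1 \setminus \{q\}$), and it is homotopic to $G|_{\overline U}$ rel $\partial U$ because any two maps of a disk into $S^1$ agreeing on the boundary are homotopic rel the boundary (as $\pi_2 S^1 = 0$). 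This homotopy is supported in $\overline U$, misses $L \setminus c$, and strictly lowers the number of components. Iterating, I may assume every component of $L$ is essential.

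If $L$ then has two or more components I would fuse two of them by surgery along an embedded arc, as in Figure~\ref{fig:loop_join}. Such an arc exists: the graph $\Gamma$ with one vertex per component of $\Sigma \setminus L$ and one edge per component of $L$ (joining the at most two regions abutting it) is connected because $\Sigma$ is, and a connected graph with at least two edges has a vertex incident to two distinct edges; this produces a component $R$ of $\Sigma \setminus L$ whose closure contains two distinct components $c, c'$ of $L$, hence an embedded arc $\alpha$ from $c$ to $c'$ with interior in $R$. Surgering $L$ along $\alpha$ merges $c$ and $c'$ into one simple closed curve, and the result is again $2$-sided, since a regular neighborhood of $L \cup \alpha$ is a union of annuli with a band attached and is therefore orientable. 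I would realize this surgery by a homotopy of $G$ supported in a disk neighborhood $B$ of $\alpha$ chosen so thin that $B \cap L$ is two arcs: one constructs directly a map $G' : B \to S^1$ that equals $G$ on $\partial B$, is transverse to $q$, and whose preimage of $q$ inside $B$ is the surgered configuration, and then $G'$ is homotopic to $G|_B$ rel $\partial B$ — again because both extend $G|_{\partial B}$ over a disk and such extensions are unique up to homotopy rel boundary. Replacing $G$ by $G'$ on $B$ carries out the surgery.

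The step I expect to be the main obstacle is this last one: checking that the combinatorial surgery move is realized by a homotopy of $G$ (keeping track of transversality and of $2$-sidedness, and verifying that the old and new local models really agree on $\partial B$). This is in the same spirit as Lemma~\ref{lem:MapsToInterval} — indeed on $B \setminus L$ the map $G$ takes values in the interval $S^1 \setminus \{q\}$. Granting it, I would conclude by induction on the number of components of $L$: each fusion lowers this number by one, after which I re-run the deletion step so that all components remain essential; since $L$ stays nonempty, the process terminates with $L$ equal to a single essential $2$-sided simple loop.
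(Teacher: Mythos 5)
Your deletion step and the overall strategy of reducing $|L|$ are fine, but the fusion step --- which you correctly flag as the main obstacle --- contains a genuine gap, and it sits exactly where the content of the lemma lies. If $\alpha$ is an embedded arc joining two components $c,c'$ of $L$ with interior in a complementary region $R$, then $G\circ\alpha$ is a loop at $q$ meeting $q$ only at its endpoints, so it is either nullhomotopic or a generator of $\pi_1 S^1$, according to whether $c$ and $c'$ are \emph{coherently} co-oriented with respect to $R$ (both pointing into $R$, or both out of it) or not. Only in the coherent case does the map $G'$ you want exist: the four points of $\partial B\cap L$ carry signs recording the direction in which $G|_{\partial B}$ crosses $q$, an embedded arc of a transverse preimage must join a $+$ to a $-$, and in the incoherent case the surgered pairing joins two points of the same sign, so no $G'$ transverse to $q$ with the surgered preimage agrees with $G$ on $\partial B$. (Equivalently, only in the coherent case can $G$ be pushed off $q$ near $\alpha$ so that Lemma~\ref{lem:MapsToInterval} applies.) Your graph argument produces a region adjacent to two distinct components of $L$, but not a coherently co-oriented pair, so the move you describe is not always available.

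The telltale sign is that after establishing $L\neq\emptyset$ your argument never uses $\pi_1$-surjectivity again, yet the conclusion is false without it: for $G: T^2\to S^1$, $(x,y)\mapsto nx$ with $n\ge 2$, the minimal $L$ consists of $n$ parallel essential circles all co-oriented the same way; every complementary annulus is adjacent to two distinct components of $L$, yet no fusion is possible, and indeed $L$ can never be made connected, since the co-oriented class $[L]$ is $n$ times a primitive class in $H_1(T^2)$ while a single simple closed curve represents a primitive or trivial class. What is missing is the terminal analysis that replaces your unconditional fusion: if neither deletion nor coherent fusion applies, then every complementary region has exactly two boundary circles, one co-oriented inward and one outward, so $G_*$ is computed by signed intersection with $L$ and has image $|L|\,\mathbb{Z}\le\mathbb{Z}$; only at this point does $\pi_1$-surjectivity enter, forcing $|L|=1$. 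This counting argument is how the paper's proof (and any correct proof along these lines) concludes.
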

\begin{proof}
	Choose $G$ within its homotopy class so that $q$ is a regular value of $G$
	and $L = G\inv(q)$ is a collection of disjoint simple loops in $\S$ with a \emph{minimal} number of 
	components.  Observe that $L$ is 2-sided but may not be connected.  We shall 
	show that the minimality assumption on $L$ along with the assumption that $G$ is $\pi_1$-surjective forces 
	$L$ to be connected.
	
	Choose a co-orientation of  $q \in S^1$ and pull it back to a co-orientation of $L$ in $\S$.
	We summarize this data by drawing a single arrow orthogonal to each component of $L$ 
	that indicates to which side of each component the co-orientation is pointing, as demonstrated in Figures
	\ref{fig:loop_join} and \ref{fig:starfish_surface}.  When we cut $\S$ along $L$ to obtain $\SmL$, we
	label the boundary components of the resulting surface with the co-orientations of the 
	components of the $L$ that the boundary components correspond to.
		
	We can homotope $G$ to reduce the number of components of $L$ whenever a component
	$\S_0$ of $\SmL$ has two boundary loops that are either \emph{both co-oriented into} or 
	\emph{both co-oriented out of} $\S_0$.  This happens, for instance, whenever $\S_0$ has three or more 
	boundary components.  
	Start by choosing a simple arc $\alpha \subset \S_0$ connecting the two boundary components of $\S_0$ with 
	coherent co-orientations, so that $G(\alpha)$ is a nullhomotopic loop in $S^1$ based at $q$.  If $U$ is
	a small neighborhood of $\alpha$ in $\S$, then we can homotope $G$ with support in $U$ so that $G|_U$ is 
	not surjective.  Hence $G|_U$ has image in a subset of $S^1$ homeomorphic to $J = (0,1)$, and so we may
	apply Lemma \ref{lem:MapsToInterval} to $G|_U$ to obtain a further homotopy of $G$ supported in $U$.  
	This has the effect of surgering $L$ along $\alpha$, which reduces of the number of components of $L$ by one 
	as shown in Figure \ref{fig:loop_join}.
	
	Another reduction of $L$ is possible if some component $\S_0$ of $\SmL$ has only one boundary 
	component.  In this case, we homotope $G$ by sending all of $\S_0$ past $q$; this homotopy can be taken
	to be the identity outside of any neighborhood of $\S_0$.  If $L'$ is the preimage of $q$ after
	the homotopy, then $L'$ consists of the same loops as $L$ except for the loop that formed the boundary of 
	$\S_0$, which has been eliminated.
	
	It follows that if $G$ is chosen to minimize the number of components of $L$, then every
	component $\S_0$ of $\SmL$ has exactly two boundary components: one co-oriented into $\S_0$ and 
	the other co-oriented out of $\S_0$, as shown in Figure \ref{fig:starfish_surface}.
	\begin{figure}[t]
		\begin{center}
			\begin{overpic}[width=0.7\textwidth]{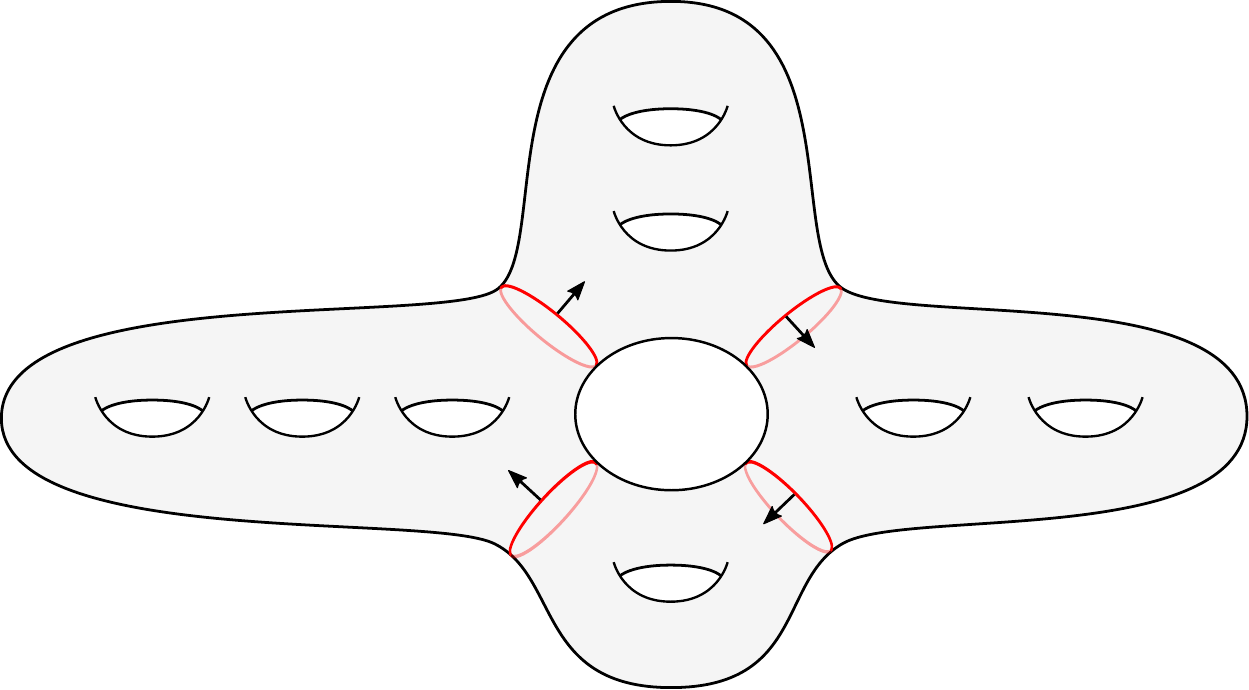}
			\end{overpic}
		\end{center}
		\caption{If $L$ has more than one component, then no loop in $\S$ can have a signed intersection
			of $\pm 1$ with $L$.}
		\label{fig:starfish_surface}
	\end{figure}
	We now observe that the homomorphism
	$G_* : \pi_1 \S \to \pi_1 S^1 \iso \Z$ is given by signed intersection with $L$, where the
	sign measures whether a loop in $\S$ agrees with the co-orientation of $L$.  From the construction
	of the co-orientation we see that
	$G_*$ must have image $\abs{L} \Z \le \Z$.  Since $G_*$ is surjective, we have
	$\abs{L} = 1$, and so $L$ is connected.  This completes the proof.
\end{proof}

\begin{proof}[Proof of Theorem \ref{thm:SLCforTBs_strong}]
	Let $P : M \to S^1$ denote the bundle projection of $M$, and let $G = P \circ F : \S \to S^1$. 
		
	\textsc{Case 1.} The map $G$ is $\pi_1$-surjective.
	Applying Lemma \ref{lem:MapsToCircle} to $G$, we may homotope $G$ so that the preimage of a 
	point $q \in S^1$ is a 2-sided simple loop $L \subset \S$ for which any loop in $\SmL$ has 
	inessential image under $G$.  Since we have that $G(\SmL) \subset \{S^1 \less q\}$,
	we may use the homotopy lifting property of the fiber bundle $M \to S^1$ to homotope 
	$F$ so that $F(\SmL) \subset M \less M_q$, where $M_q$ is the fiber of $M$ lying above $q$.
	
	Since $M \less M_q$ is homeomorphic to $T \times I$ and is therefore orientable,
	it follows from the 2-sidedness of $F$ that $\SmL$ 
	must be orientable.  Therefore $\SmL$ is an orientable compact surface with two boundary
	components, and so $\chi(\SmL) = \chi(\S)$ must be even.  This proves the claim that there is no
	2-sided map $\S \to M$ when $\chi(\S)$ is odd.  
	
	We may now suppose that $\chi(\S) = 2 - 2g$, where $g \ge 2$ is an integer.  Then $\chi(\SmL) = 2-2g$, so
	$\SmL$ is the connect sum of a twice-punctured sphere with $g-1$ tori.  It follows that there is
	an embedded punctured torus $\Sigma_0$ in $\SmL$. 
	The boundary loop $\beta$ of $\Sigma_0$ is a separating 
	simple loop in $\S$ whose corresponding element in $\pi_1 \S$ is the commutator of the elements represented by 
	loops $\gamma$ and $\delta$, as shown in Figure $\ref{fig:commutator_in_surface}$.
	\begin{figure}[t]
		\begin{center}
			\begin{overpic}[width=0.7\textwidth]{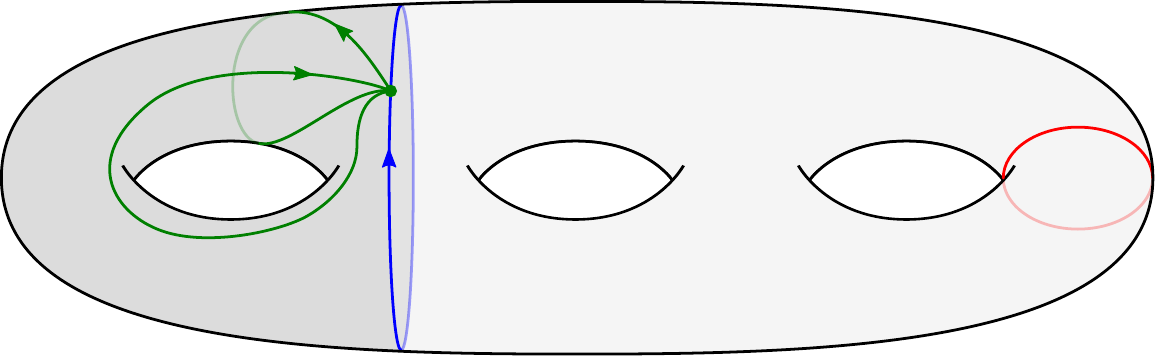}
	 			\put (25,31) {$\beta$}
	 			\put (15,07) {$\gamma$}
	 			\put (34,32) {$\delta$}
				\put (101,15) {$L$}
			\end{overpic}
		\end{center}
		\caption{The simple loop $\beta$ in $\ker F_*$ is the boundary of the punctured torus 
					$\Sigma_0 \subset \S$.}
		\label{fig:commutator_in_surface}
	\end{figure}
	The loops $\beta$, $\gamma$, and $\delta$ all have image in $M \less M_q$, and since $M \less M_q$ has
	abelian fundamental group it follows that $F_*[\beta]$ is trivial in $\pi_1 M$.  Thus $\beta$ is the desired 
	essential simple loop in the kernel of $F_*$.  (A similar argument shows that any essential separating loop in 
	$\SmL$ must represent an element of $\ker F_*$.)

	\textsc{Case 2.} The map $G$ is not $\pi_1$-surjective.
	In this case, either $G_*$ is the zero map or it has image $n \Z \le \Z \iso \pi_1 S^1$ for some 
	$n\ne 0, \pm 1$.  
	
	If $G_*$ is the zero map, then $G$ is homotopic to a constant map, and the homotopy can be 
	lifted to a homotopy of $F$ so that the resulting image of $\S$ is contained in a torus fiber $M_p$ of $M$.  
	Since $M_p$ is an orientable 2-sided submanifold of $M$, 
	by the 2-sidedness of $F$ we have that $\S$ is orientable, and so $\chi(\S)$ cannot be odd.  If
	$\chi(\S) \le -2$ then there is a essential separating loop in $\S$, and we argue as above that 
	such a loop represents an element of $\ker F_*$.
	
	If instead $G_*$ has image a finite index subgroup $n \Z \le \Z$, then $p_*\inv(n \Z)$ is  
	a proper finite-index subgroup of $\pi_1 M$ and $F$ lifts to the corresponding cover $\widetilde M \to M$.
	Since $\widetilde M$ must also be a torus bundle over a circle and
	the projection $\widetilde M \to M$ is $\pi_1$-injective, we may replace $M$ by $\widetilde M$ and $F$ by its 
	lift and appeal to Case 1.
\end{proof}

The following result will complete the proof of Theorem \ref{thm:SLCforTBs}.

\begin{prop}
	\label{prop:SLCforKleinbottles}
	Let $K$ be a Klein bottle and let $G$ be an infinite torsion-free group.
	If $f : \pi_1 K \to G$ is a homomorphism with nontrivial kernel, then there is a essential simple loop in $K$ 
	that represents an element of $\ker f$.
\end{prop}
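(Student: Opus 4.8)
The statement is purely group-theoretic, so the plan is to argue entirely inside $\pi_1 K = \ang{a,b \mid aba\inv b = 1}$, where $a$ and $b$ are the two standard simple closed curves of $K$ (the images of the edges of the square in the usual identification). The relation reads $aba\inv = b\inv$, so $\ang{b} \iso \Z$ is normal in $\pi_1 K$, with $a$ acting on it by inversion, and $\pi_1 K/\ang{b} \iso \Z$. Since $K$ is closed, an essential loop is simply a non-nullhomotopic one, and $b$ has infinite order, so $b$ is an essential simple loop in $K$. Thus the whole proof reduces to showing $b \in \ker f$, and I would obtain this by examining the subgroup $\ker f \cap \ang{b}$ of $\ang{b} \iso \Z$.

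First I would treat the case $\ker f \cap \ang{b} \ne 1$: then $b^k \in \ker f$ for some $k \ge 1$, so $f(b)^k = 1$ in $G$, and since $G$ is torsion-free this gives $f(b) = 1$, i.e. $b \in \ker f$, as wanted. (The hypothesis that $G$ is infinite is not actually used here, since a finite torsion-free group is trivial.)

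The substantive case is $\ker f \cap \ang{b} = 1$, where I would show that this forces $\ker f = 1$, contradicting the hypothesis. Writing $N = \ker f$ and projecting to $\pi_1 K/\ang{b} \iso \Z$: if $N$ has trivial image then $N \le \ang{b}$ and hence $N = N \cap \ang{b} = 1$; otherwise the image is $m\Z$ for some $m \ge 1$, and I would pick $g \in N$ mapping to the generator, so $g = a^m b^j$ for some $j$. Normality of $N$ gives $aga\inv = a^m b^{-j} \in N$ (using $ab^j a\inv = b^{-j}$), so $g\inv(aga\inv) = b^{-2j} \in N \cap \ang{b} = 1$, forcing $j = 0$ and $g = a^m$. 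Running the same argument on an arbitrary $h \in N$ — which maps to a power of $\bar a^m$, hence equals $a^{mk} b^{j'}$ with $a^{mk} = g^k \in N$, so $b^{j'} \in N \cap \ang{b} = 1$ — shows $N = \ang{a^m}$. To finish I would rule this out: a short computation from $aba\inv = b\inv$ gives $bab\inv = ab^{-2}$, hence $b a^m b\inv = a^m b^{-2}$ when $m$ is odd, so $\ang{a^m}$ is not normal for odd $m$; since $N$ is a kernel and therefore normal, $m$ is even, but then the image of $a$ in $\pi_1 K/N = \pi_1 K/\ang{a^m}$ has order exactly $m \ge 2$, so $\pi_1 K/N$ — a subgroup of the torsion-free group $G$ — would contain nontrivial torsion, a contradiction.

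I do not expect any step to be a serious obstacle; the only real computations are the identification $N = \ang{a^m}$ in the second case together with the conjugation formulas in $\pi_1 K$, where the one thing to be careful about is sign bookkeeping with the non-abelian relation, i.e. $a^i b^j a^{-i} = b^{(-1)^i j}$ and $bab\inv = ab^{-2}$. One could instead avoid the case split by observing that the only torsion-free proper quotients of $\pi_1 K$ are $\Z$ and the trivial group — each of which kills $b$ — but the two-case argument above seems more transparent. Note that the proof in fact yields the stronger conclusion that $b$ itself lies in $\ker f$ whenever $\ker f \ne 1$.
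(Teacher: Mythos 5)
Your proof is correct, but it takes a genuinely different route from the paper's. The paper restricts $f$ to the index-two torus subgroup $H = \ang{a^2,b}$, uses torsion-freeness to conclude that $f(\pi_1 K)$ is trivial or infinite cyclic, realizes the resulting surjection $\pi_1 K \to \Z$ by a map $K \to S^1$, and then invokes Lemma \ref{lem:MapsToCircle} to produce a 2-sided simple loop whose complementary annulus has core in the kernel --- so it reuses the topological machinery that drives the rest of Section \ref{sec:TBs}. You instead work entirely inside $\pi_1 K = \Z \rtimes \Z$ with the normal form $a^i b^j$: your dichotomy on $\ker f \cap \ang{b}$, the identification $N = \ang{a^m}$ in the second case, and the conjugation formulas $a^i b^j a^{-i} = b^{(-1)^i j}$ and $b a^m b\inv = a^m b^{-2}$ ($m$ odd) all check out, and the torsion obstruction for $m$ even correctly rules out the remaining possibility. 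What your argument buys is self-containedness (no appeal to Lemma \ref{lem:MapsToCircle} or to the classification of virtually cyclic torsion-free groups via \cite{ScottWall}) and a sharper conclusion: the specific fiber curve $b$ lies in $\ker f$ whenever the kernel is nontrivial, and the hypothesis that $G$ is infinite is never needed. What it gives up is uniformity with the rest of the paper, where the same topological lemma is the engine for all the circle-valued arguments; in the end both proofs locate the same free homotopy class of simple loop in the kernel.
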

\begin{proof}
	We proceed by reducing to the case in which $f$ has image an infinite cyclic subgroup of $G$.  Write the 
	fundamental group of $K$ as
	\[
		\pi_1 K = \ang{ a, b \div aba\inv b = 1},
	\]
	and let $H = \ang{ a^2, b } \le \pi_1 K$ be the index-2 subgroup of $\pi_1 K$ corresponding
	to the double cover of $K$ by a torus.  The kernel of $f|_H$ must be 
	nontrivial: for if $x \in \ker f_*$ is not the identity then $x^2 \in H \cap \ker f_*$ is also not the 
	identity.  Hence $f|_H$ is a non-injective map from a rank-2 free-abelian group to a torsion free group, and 
	so the image of $f|_H$ is either trivial or infinite cyclic.  If $f(H) = 1$, then since $f(a)^2 = f(a^2) = 1$
	and $M$ is torsion-free, $f(a)$ must be trivial. In this case $f$ is the trivial map and we're done.  
	If $f(H)$ is infinite cyclic, then $f(\pi_1 K)$ is a virtually-infinite-cyclic torsion-free group, and so must
	be infinite cyclic (see, for instance, \cite[Theorem 5.12]{ScottWall}).  
	
	Therefore we may replace $f$ by a surjective map $f' : \pi_1 K \to \Z$.  Since $S^1$ is a $K(\Z,1)$, there is 
	a map $F : K \to S^1$ with $F_* = f'$, and so Lemma \ref{lem:MapsToCircle} can be applied to obtain
	a essential 2-sided simple loop $L \subset K$ such that every loop in $K \less L$ has inessential image in 
	$S^1$.  Hence we see that $K \less L$ is an annulus, the core of which is a essential simple loop in $K$ that 
	represents an element of $\ker f'$, and hence of $\ker f$.
\end{proof}

\section{Torus Semi-Bundles}
	\label{sec:TSBs}
	
The following theorem, together with Theorem \ref{thm:SLCforTBs}, will establish Theorem \ref{thm:SLCforSolvmflds}.

\begin{theorem}
	\label{thm:SLCforTSBs}
	If $M$ is an orientable torus semi-bundle that admits a geometric structure modeled on $\sol$, then the Simple 
	Loop Conjecture holds for $M$.
\end{theorem}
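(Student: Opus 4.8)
The plan is to mimic the strategy used for torus bundles, but adapted to the orbifold fiber bundle structure $M \to I(2,2)$ described in Section~\ref{sec:TSBdefs}. Let $F : \Sigma \to M$ be a 2-sided map from a closed surface with $F_*$ not injective. The first step is to compose with the projection $P : M \to I(2,2)$ to the mirrored interval and try to arrange $G = P \circ F$ so that $L = G^{-1}(\text{regular point})$ is a nice 1-submanifold of $\Sigma$. The key difference from the torus-bundle case is that $\pi_1^{\mathrm{orb}} I(2,2) \cong D$, the infinite dihedral group, rather than $\Z$; so instead of Lemma~\ref{lem:MapsToCircle} I would develop the analogous statement for maps to the mirrored interval, homotoping $G$ to minimize the number of components of $L$. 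As before, any component of $\Sigma \mless L$ with a boundary pattern forcing a surgery (for instance three or more boundary loops, or coherent co-orientations) can be removed via Lemma~\ref{lem:MapsToInterval} applied in a neighborhood of a connecting arc, since locally near any interior arc the target looks like an interval $J$.

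Once $L$ is minimal, I expect the pieces of $\Sigma \mless L$ to map into the ``blocks'' of $M$, which are either the product region $T \times I$ over a generic subinterval or one of the twisted $I$-bundles $N_i$ over a (possibly) Klein bottle lying over a mirrored endpoint. The second step is a case analysis on where the image lands. If some component of $\Sigma \mless L$ maps into a $T \times I$ block, its fundamental group maps to an abelian group, so any essential separating loop in that component (which exists once $\chi \le -2$, as in the proof of Theorem~\ref{thm:SLCforTBs_strong}) lies in $\ker F_*$. If all of $\Sigma$ compresses into the middle torus or into one of the $N_i$, one uses that $\pi_1 N_i$ is virtually $\Z^2$ and the structure of maps from surface groups into such groups—here is where the hypothesis $rstu \ne 0$ on the gluing matrix should enter, since it is exactly the condition preventing the semi-bundle from being Seifert fibered (Theorem~\ref{thm:solvmfldClassification}) and hence forcing the monodromy to be genuinely hyperbolic; that hyperbolicity is what makes the two $\Z^2$ subgroups intersect in a way that lets one detect a kernel element as a simple loop.

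The remaining step handles the low-complexity surfaces: $\chi(\Sigma) = 0$ (torus or Klein bottle) is covered by Hass~\cite{Hass1987} and Proposition~\ref{prop:SLCforKleinbottles} respectively, and the parity obstruction (no 2-sided map from an odd-Euler-characteristic surface, because the relevant submanifolds of $M$ are orientable and 2-sidedness then forces $\Sigma$ orientable) should be pushed through just as in Theorem~\ref{thm:SLCforTBs_strong}. As in the torus-bundle argument, after arranging $L$ well one peels off an embedded punctured torus from an orientable piece of $\Sigma \mless L$ and takes its boundary as the desired essential simple loop in $\ker F_*$.

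I expect the main obstacle to be the middle step: controlling the image when $F$ does \emph{not} compress any single complementary region into a product block but instead genuinely ``crosses'' the middle torus, so that the combinatorics of $L$ against the mirror points of $I(2,2)$ becomes delicate. Unlike the $\Z$ case, signed intersection number with $L$ no longer lives in an abelian group, so the clean argument ``$G_*$ is intersection with $L$, surjectivity forces $|L| = 1$'' must be replaced by a more careful bookkeeping of how the components of $\Sigma \mless L$ are permuted by the reflections generating $D$, and one must rule out the possibility that $F_*$ is injective on each piece yet the assembled map still has nontrivial kernel coming from the amalgamation—this is presumably where the explicit $\sol$ hypothesis (via $rstu \ne 0$) does the real work, and making that precise will be the crux of the proof.
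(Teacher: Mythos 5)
Your overall architecture matches the paper's: decompose $\Sigma$ along the preimage $L$ of the middle torus (equivalently of a regular fiber of $M \to I(2,2)$), minimize the number of components of $L$, and treat low-complexity surfaces separately. But the two steps you yourself flag as the crux are genuinely missing, and they are where all the work is. First, the minimization: because the middle torus $S$ separates $M$, the co-orientation/signed-intersection argument of Lemma \ref{lem:MapsToCircle} does not transfer, and you do not supply a replacement. The paper's replacement (Lemma \ref{lem:MapsToTSBs}) uses the $\Z/2$ monodromies $\rho_i : \pi_1 N_i \to \Z/2$ of the two twisted $I$-bundles: given three boundary components of a complementary piece, the three connecting arcs map to loops in some $N_i$ whose $\rho_i$-values satisfy an additive relation in $\Z/2$, so one arc has image homotopic into $S$ and can be surgered away (Lemma \ref{lem:TSBhomotopy}); a parity argument plus a commutator trick handles two-boundary pieces. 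The outcome is that a minimal $L$ is a family of \emph{parallel separating} curves splitting $\Sigma$ into two end pieces and annuli --- a picture quite different from the connected nonseparating $L$ of the torus-bundle case, and your proposed bookkeeping of ``how the reflections permute the pieces'' is never made to produce it.

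Second, you misplace where the $\sol$ hypothesis enters. For $\chi(\Sigma) < -2$ no hypothesis on the gluing matrix is needed at all: one of the two end pieces has $\chi < -1$, and a Lefschetz-duality argument applied to $\rho_1 \circ F_*$ produces a simple loop $\ell$ whose complement maps into the abelian group $\pi_1 S$, whence a punctured-torus boundary lies in $\ker F_*$. The hypothesis $rstu \neq 0$ is used only for the genus-two surface, and there via an explicit computation: writing $\pi_1 M$ with gluing matrix $\begin{pmatrix} r & s \\ t & u \end{pmatrix}$, the surface relation $[a_1,b_1] = [a_2,b_2]$ forces an equation $y_1^{2k_1} = y_2^{2k_2}$, and transporting it through the gluing gives $4sk_1 = 0$; so either $k_1 = 0$ (and $L$ itself is the desired kernel loop) or $s = 0$ (and $M$ is not modeled on $\sol$ by Theorem \ref{thm:solvmfldClassification}). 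There is also a covering-space reduction needed when $L$ is disconnected in the genus-two case. Your appeal to ``hyperbolicity making the two $\Z^2$ subgroups intersect'' is motivation, not an argument, and without these computations the proposal does not close.
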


As in the torus bundle case, we have a slightly stronger statement for maps from surfaces of sufficiently 
	large genus into orientable torus semi-bundles.

\begin{theorem}
	\label{thm:SLCforTSBs_strong}
	Let $\S$ be a closed surface and let $M$ be an orientable torus semi-bundle. If $\chi(\S) < -2$
	and $F : \S \to M$ is a 2-sided map, then there is an essential simple loop in $\S$ that represents an element 
	of $\ker F_*$.
\end{theorem}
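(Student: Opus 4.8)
The plan is to study $F$ through the orbifold bundle projection $\mathrm{pr}\colon M\to I(2,2)$ of Section~\ref{sec:TSBdefs}, running a case analysis on the base in the spirit of Section~\ref{sec:TBs}. First, since $M$ is orientable, $2$-sidedness forces $\S$ to be orientable, so as $\chi(\S)<-2$ is even, $\S$ has genus $g\ge 3$. Write $\pi_1^{\mathrm{orb}}I(2,2)=D=\ang{g_1,g_2\mid g_1^2=g_2^2=1}$, let $T$ be the middle torus, and put $G=\mathrm{pr}\circ F$. As $D$ is virtually cyclic, every subgroup of $D$ is trivial, a $\Z/2$ generated by a reflection, an infinite cyclic subgroup of the translation subgroup $\ang{g_1g_2}$, or a finite‑index infinite dihedral group; I would dispose of these possibilities for $\operatorname{im}G_*$ in turn.

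I will use the following \emph{Klein‑bottle principle} twice: if $Y$ is a compact surface of genus $\ge 2$ and $h\colon\pi_1Y\to\pi_1K$ is a homomorphism with $h[\bd Y]=1$ (vacuous when $Y$ is closed) and non‑cyclic image, then some loop $\beta$ in the interior of $Y$ bounding a once‑punctured torus satisfies $h[\beta]=1$. Indeed, every subgroup of the torsion‑free group $\pi_1K$ is one of $1,\Z,\Z\oplus\Z,\pi_1K$; if $\operatorname{im}h$ is abelian any such $\beta$ works, as $h[\beta]$ is a commutator, and otherwise $\operatorname{im}h\cong\pi_1K$, so composing with a surjection $\pi_1K\to\Z$ with infinite cyclic kernel and dividing by the index of the image gives a surjection $\bar h\colon\pi_1Y\to\Z$ with $\bar h[\bd Y]=0$; realizing $\bar h$ by a non‑separating simple loop $\ell$ in the interior of $Y$ (Lemma~\ref{lem:MapsToCircle}, or its analogue for surfaces with boundary), $h$ maps $\pi_1$ of each component of $Y\mless\ell$ into the cyclic kernel, and a once‑punctured‑torus loop in the genus‑$\ge 1$ piece of $Y\mless\ell$ lies in $\ker h$.

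Now suppose $\operatorname{im}G_*$ is trivial: then $F_*(\pi_1\S)$ lies in the fiber subgroup $\Z\oplus\Z\le\pi_1M$, hence is abelian, and since $g\ge 3$ a once‑punctured‑torus loop in $\S$ is an essential loop in $\ker F_*$. If $\operatorname{im}G_*$ is infinite cyclic it lies in $\ang{g_1g_2}$, so $F$ lifts through the $\pi_1$-injective torus‑bundle double cover $\widehat M\to M$; then $\ker\widehat F_*=\ker F_*\ne1$ and $\chi(\S)$ is even and negative, so Theorem~\ref{thm:SLCforTBs_strong} applied to $\widehat F$ gives the loop. If $\operatorname{im}G_*\cong\Z/2$ it is generated by a reflection, so (after lifting) $F$ factors up to homotopy through $\mathrm{pr}^{-1}$ of a mirrored half‑interval, i.e.\ through one of the twisted $I$-bundles $N_i$ over a Klein bottle, whence $F_*(\pi_1\S)\le\pi_1N_i\cong\pi_1K$ has nontrivial kernel; then either $\operatorname{im}F_*$ is cyclic (argue as in the trivial case) or the Klein‑bottle principle with $Y=\S$ produces the loop.

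Finally suppose $\operatorname{im}G_*$ is infinite dihedral; replacing $M$ by the finite cover for that subgroup (again an orientable torus semi‑bundle, the covering $\pi_1$-injective) we may assume $G$ is $\pi_1$-surjective, so $F^{-1}(T)\ne\emptyset$. Homotope $F$ transverse to $T$ and simplify $F^{-1}(T)$ by three moves until none applies: an arc surgery as in Lemma~\ref{lem:MapsToInterval}, supported where $G$ stays near a regular value, whenever two components of $F^{-1}(T)$ cobound a complementary region; absorbing a disk component of a complementary region across $T$, always possible since $\pi_2(N_i,T)=0$; and pushing a one‑boundary non‑disk complementary component $\S_0$ across $T$, possible whenever $F_*(\pi_1\S_0)\le\pi_1T$ since $\S_0$, $N_i$, and $T$ are aspherical. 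Each move decreases $\abs{F^{-1}(T)}$, so the process stops; then every complementary region has exactly one boundary circle and is not a disk, which (as $\S$ is connected) forces $F^{-1}(T)$ to be a single essential two‑sided simple loop $c$ splitting $\S=\S_1\cup_c\S_2$ into connected subsurfaces of genera $g_1,g_2\ge1$ with $F(\S_i)\subset N_i$ and $F_*(\pi_1\S_i)\not\le\pi_1T$. Since $g_1+g_2=g\ge3$, after relabeling $g_2\ge2$; then $H_2:=F_*(\pi_1\S_2)\le\pi_1N_2\cong\pi_1K$ is not contained in the index‑two subgroup $\Z\oplus\Z$, so $H_2\cong\Z$ or $H_2\cong\pi_1K$. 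If $H_2\cong\Z$ then $[c]$, a product of commutators in $\pi_1\S_2$, lies in $\ker F_*$ and $c$ is the desired loop; if $H_2\cong\pi_1K$ then $\pi_1\S_2$ (free of rank $2g_2\ge4$) does not inject into $\pi_1K$, so $F_*|_{\pi_1\S_2}$ has nontrivial kernel and the Klein‑bottle principle with $Y=\S_2$ yields a once‑punctured‑torus loop in $\S_2$, essential in $\S$, in $\ker F_*$. The step I expect to be the main obstacle is this termination analysis: checking that the three moves genuinely lift to homotopies of $F$, and that the only irreducible configurations are $F^{-1}(T)=\emptyset$ (handled by the earlier cases) and the single‑curve configuration above, together with the bookkeeping forcing both $\S_i$ connected with $F_*(\pi_1\S_i)\not\le\pi_1T$.
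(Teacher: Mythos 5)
Your overall strategy---a case analysis on the image of $\pi_1\S$ in the infinite dihedral deck group $D$, with the easy cases (trivial, infinite cyclic, single reflection) dispatched by passing to covers---is close in spirit to the paper's, and those easy cases are handled correctly; your ``Klein-bottle principle'' is a reasonable self-contained substitute for the paper's appeal to Gabai when the image lies in a single $N_i$. The main case (infinite dihedral image), however, has two genuine gaps. First, the simplification of $L = F\inv(S)$: your arc-surgery move is available only when $F(\alpha)$ is homotopic rel endpoints into the middle torus $S$, equivalently when $\rho_i[F(\alpha)] = 0$ for the monodromy $\rho_i : \pi_1 N_i \to \Z/2$; this is not automatic, and showing that a usable arc exists whenever a complementary region has several boundary circles is exactly the content of the paper's Lemma \ref{lem:MapsToTSBs} (the three-arc parity argument when there are at least three boundary circles, and a separate parity-or-commutator argument when there are exactly two). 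Moreover the terminal configuration is not a single curve: annular complementary regions cannot be removed (their only essential simple loops are boundary-parallel, so neither of your reduction criteria applies to them), so a minimal $L$ is in general a chain of parallel curves separated by annuli as in Figure \ref{fig:baguette}. The two outermost non-annular pieces $\S_1, \S_2$ still exist, so your endgame could in principle be run on them, but the reduction to $\abs{L} = 1$ is false as stated.

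Second, and more seriously, the final step misapplies your own principle: with $Y = \S_2$ you need $h[\bd Y] = 1$, but $F_*[\bd \S_2]$ lies in $\pi_1 S \iso \Z \oplus \Z$ and is in general nontrivial, so the composite $\bar h : \pi_1 \S_2 \to \Z$ need not vanish on the boundary and is not dual to a closed curve in the interior of $\S_2$. No $\Z$-valued character of $\pi_1 K$ kills the index-two subgroup $\pi_1 S$ (such a character would factor through $\Z/2$ and hence be trivial); what does kill it is the $\Z/2$-valued monodromy $\rho_i$, and this is precisely how the paper proceeds: $\rho_i \circ (F|_{\S_1})_*$ gives a nontrivial class in $H^1(\S_1, \bd\S_1; \Z/2)$, Lefschetz duality together with Meyerson's theorem produces a simple loop $\ell$ dual to it, the complement of $\ell$ maps into the abelian group $\pi_1 S$, and the hypothesis $\chi(\S_1) < -1$ (this is where $\chi(\S) < -2$ is really used) guarantees an embedded punctured torus in $\S_1 \mless \ell$ whose boundary is the desired loop. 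So the skeleton of your argument is right, but both of the steps you flagged as potential obstacles do in fact require the $\Z/2$-parity arguments that you have not supplied.
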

	
To prove the theorem, we will employ the following two lemmas, which allow us to homotope maps from surfaces to 
	torus semi-bundles into a simplified position.  

\begin{lemma}
	\label{lem:TSBhomotopy}
	Let Let $M$ be an orientable torus semi-bundle with middle torus $S \subset M$, let $\S$ be a (not necessarily
	closed) surface, and let $F : \S \to M$ be a map that is transverse to $S$.  Suppose that $\alpha \subset \S$
	is a simple arc that connects two distinct components of $L = F\inv(S)$ whose interior is disjoint from $L$
	and that $F(\alpha)$ is homotopic (rel endpoints) into $S$.  
	Then $F$ can be homotoped in a neighborhood of $\alpha$ so that the preimage of $S$ changes by
	surgery along $\alpha$.
\end{lemma}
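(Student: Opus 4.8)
The plan is to reduce the lemma to Lemma~\ref{lem:MapsToInterval} by working inside a product neighborhood of the middle torus. Since $M$ is an orientable torus semi-bundle the middle torus $S$ is two-sided, so fix a bicollar $W\subset M$ together with an identification $W\cong S\times(-1,1)$ under which $S$ corresponds to $S\times\{0\}$, and write $\pi:W\to(-1,1)$ for the projection to the second factor. The heart of the argument is a preliminary homotopy of $F$, supported in a neighborhood of $\alpha$, after which some strip neighborhood $U\cong[0,1]\times[-1,1]$ of $\alpha=[0,1]\times\{0\}$ is carried into $W$, while $L\cap U$ is still the pair of ``vertical'' arcs $\{0,1\}\times[-1,1]$ and the interior of $\alpha$ still misses $L$. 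Granting this, the composite $H:=\pi\circ F|_U:U\to(-1,1)$ is transverse to $0$ with $H^{-1}(0)=L\cap U$, a pair of arcs joined by the arc $\alpha$; Lemma~\ref{lem:MapsToInterval} (with the interval $J$ there replaced by $(-1,1)$) then provides a homotopy of $H$ supported near $\alpha$ after which $H^{-1}(0)$ changes by surgery along $\alpha$. Holding the $S$-coordinate of $F|_U$ fixed and applying this homotopy only to the $(-1,1)$-coordinate lifts it to a homotopy of $F$ inside $W$; since this homotopy agrees with $F$ outside $U$ and alters $F^{-1}(S)\cap U$ exactly by surgery along $\alpha$, splicing it together with the preliminary homotopy yields the conclusion.

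The preliminary homotopy is where the hypothesis on $F(\alpha)$ is used. Because the interior of $\alpha$ misses $L$, the path $F(\operatorname{int}\alpha)$ lies in $M\setminus S$, hence in one of the two pieces, say $N_1$, into which $S$ separates $M$. Using that $S$ is incompressible in $M$ (so that $\pi_1 M$ splits as an amalgam over $\pi_1 S$), the hypothesis that $F(\alpha)$ is homotopic rel $\partial\alpha$ into $S$ upgrades to the statement that $F(\alpha)$ is homotopic rel $\partial\alpha$ \emph{within $N_1$} into $\partial N_1=S$, hence into the half-collar $S\times(0,1)\subset W$ on the $N_1$-side; moreover one may take the resulting arc to meet $S$ only in its two endpoints, transversally there, by bending its interior slightly off the $0$-level. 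I would then extend this homotopy of $F|_\alpha$ to a homotopy of $F$ supported in a small neighborhood of $\alpha$ (possible since $M$ is aspherical, being covered by $T\times\mathbb{R}$), chosen so as to keep the image off $S$ near $\alpha$ and to be stationary near $\partial U$. After this homotopy $L=F^{-1}(S)$ is literally unchanged, $F(\alpha)\subset W$, and since $W$ is open a sufficiently thin strip neighborhood $U$ of $\alpha$ is carried into $W$ with $L\cap U$ in the required position.

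I expect the only real obstacle to lie in the bookkeeping of this preliminary step: pushing a neighborhood of $\alpha$ into the bicollar while simultaneously keeping $F$ transverse to $S$, leaving $L$ unchanged away from $\alpha$, and preserving the configuration in which $\alpha$ joins two distinct components of $L\cap U$ with interior disjoint from $L$. The tension is that the homotopy supplied by the hypothesis, taken at face value, drags $F(\alpha)$ all the way onto $S$, which would place $\alpha$ itself inside $F^{-1}(S)$ and destroy the configuration needed for Lemma~\ref{lem:MapsToInterval}; the remedy is to stop the homotopy just short of $S$ and then exploit the product structure of $W$ to bend $F(\operatorname{int}\alpha)$ up to a parallel level, arranging that the whole homotopy stays off $S$ near $\alpha$ so that $L$ is genuinely untouched. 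Once the map has been localized to a map into $S\times(-1,1)$ near $\alpha$, realizing the surgery is exactly Lemma~\ref{lem:MapsToInterval} applied to the $(-1,1)$-coordinate and then lifted back into $W$.
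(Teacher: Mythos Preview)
Your approach is correct and essentially the same as the paper's: a preliminary homotopy supported near $\alpha$ puts $F|_U$ into a product $T\times J$ containing $S$, then Lemma~\ref{lem:MapsToInterval} is applied to the $J$-coordinate and the resulting homotopy is lifted via the bundle projection $T\times J\to J$. The only difference is that the paper takes the product region to be the complement in $M$ of the two one-sided core surfaces (which is already all of $T\times(0,1)$) rather than a bicollar of $S$, so the preliminary homotopy merely has to push $F|_U$ off those cores---a slightly cleaner use of the hypothesis that avoids the bookkeeping you flag in your final paragraph.
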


\begin{proof}
	Let $U$ be a tubular neighborhood of $\alpha$ in $\S$ that does not intersect any components of $L$ except 
	the two that are connected by $\alpha$.  Since $F(\alpha)$ is homotopic into $S$, after possibly shrinking $U$
	we can homotope $F$ with support in $U$ so that $F|_U$ has image that does not intersect either of the 1-sided 
	surfaces that are the zero sections of the twisted $I$-bundles that were used to construct $M$. 
	
	It follows that $F|_U$ has image in a subset of $M$ that is homeomorphic to $T \times J$, where $T$
	is a torus and $J = (0,1)$.  Let $P : T \times J \to J$ denote the projection onto the second factor, and 
	let $r \in J$ be the image of $S$.  Then $P \circ F|_U : U \to J$ satisfies the assumptions of 
	Lemma \ref{lem:MapsToInterval}, so we may apply it to obtain a homotopy of $P \circ F|_U$ after which
	$L$ has been surgered along $\alpha$.  Since $T \times J \to J$ is a fiber bundle, we can lift the homotopy
	of $P \circ F|_U$ to a homotopy of $F|_U$, and from that we obtain a homotopy of $F$ supported in $U$,
	as desired.
\end{proof}

\begin{lemma}
	\label{lem:MapsToTSBs}
	Let $M$ be an orientable torus semi-bundle with middle torus $S \subset M$, let $\S$ be a closed surface with  
	$\chi(\S) < 0$, and let $F : \S \to M$ be a (2-sided) map that injects on simple loops (that is, there are 
	no elements represented by simple loops in the kernel of $F_*$).  Then $F$ can be homotoped so that 
	$L = F\inv(S)$ is either empty or is a collection of parallel 2-sided separating essential simple loops in 
	$\S$.
\end{lemma}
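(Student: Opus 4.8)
The plan is to put $L = F^{-1}(S)$ into a normal form by a sequence of homotopies of $F$, each of which either reduces the number of components of $L$ or replaces $L$ by a ``better'' configuration, using Lemma~\ref{lem:TSBhomotopy} as the surgery tool. Start by homotoping $F$ so that it is transverse to $S$ and $L$ has a minimal number of components among all maps homotopic to $F$ and transverse to $S$. As in Lemma~\ref{lem:MapsToCircle}, pull back a co-orientation of $S$ to a co-orientation of $L$; this makes sense because $L$ consists of 2-sided simple loops (they are preimages of the 2-sided torus $S$ under a transverse map). The first order of business is to show that $L$ can be taken to contain no inessential loops. If a component $L_0$ of $L$ bounds a disk $D$ in $\Sigma$ with $D \cap L = L_0$, then $F(L_0)$ is a nullhomotopic loop in $S$, hence $F|_{D}$ can be homotoped rel $\partial D$ to push $D$ off of $S$ entirely (the image of $F|_{\partial D}$ bounds in $S$, so $F|_D$ is homotopic rel boundary into one of the two twisted $I$-bundles, and then past the zero section), eliminating $L_0$ and contradicting minimality. (One must be slightly careful and pick an \emph{innermost} such disk so the homotopy is supported in $D$.)

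Next I would rule out the remaining failures of the desired conclusion by the same co-orientation bookkeeping used in Lemma~\ref{lem:MapsToCircle}. Cut $\Sigma$ along $L$ to get $\Sigma \mless L$, and label each boundary circle of each component by the co-orientation it inherits. If some component $\Sigma_0$ of $\Sigma \mless L$ has two boundary loops $c_1, c_2$ with coherent co-orientations (both pointing into $\Sigma_0$ or both out), pick a simple arc $\alpha \subset \Sigma_0$ joining $c_1$ to $c_2$; then $F(\alpha)$ is a loop in $M$ based on $S$ whose concatenation with sub-arcs of $F(c_1), F(c_2)$ shows it is homotopic rel endpoints into $S$ — here is where I need that the relevant loops in $S$ are understood, and where the hypothesis that $F$ injects on simple loops may be invoked, since the loop $c_1 \cup \alpha \cup (\text{arc of } c_2)$ is (up to homotopy) a simple loop mapped into $S$, and if $F$-inessential it is nullhomotopic in $M$, forcing the arc's image to be homotopic into $S$. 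Applying Lemma~\ref{lem:TSBhomotopy} surgers $L$ along $\alpha$, reducing $|L|$ by one, a contradiction. Likewise, if some $\Sigma_0$ has exactly one boundary component, we homotope $F$ by pushing all of $\Sigma_0$ across $S$, again reducing $|L|$. Hence every component of $\Sigma \mless L$ has exactly two boundary circles, one co-oriented in and one out; this is exactly the ``starfish'' picture of Figure~\ref{fig:starfish_surface}, and it forces each component of $L$ to be non-separating-free in the following sense: consecutive loops of $L$ are parallel. Indeed, a component of $\Sigma \mless L$ with two boundary circles is a planar surface or a once-punctured-genus piece, but if it had genus we could find an essential non-separating simple loop in it disjoint from $L$, and pushing $L$ across that loop's dual would reduce components; so each component of $\Sigma \mless L$ is an annulus, whence the loops of $L$ are mutually parallel.

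Finally I must check the two remaining adjectives: the loops are 2-sided (immediate, as noted) and \emph{separating}. Since the components of $\Sigma \mless L$ are annuli and they glue up cyclically along the circles of $L$, the union $L$ lies in a single ``annular necklace''; if $\Sigma$ were obtained by a cyclic gluing of annuli, $\Sigma$ would be a torus or Klein bottle, violating $\chi(\Sigma) < 0$. Therefore the necklace must close up into a subsurface with boundary, forcing each $L_i$ to separate $\Sigma$ — more precisely, since all the $L_i$ are parallel, $\Sigma \mless L$ has exactly two non-annular ``cap'' pieces and $|L|$ annuli between them if $|L|\ge 1$; cutting along any single $L_i$ disconnects $\Sigma$. (If no such reduction applies because $L = \emptyset$, that is the first alternative in the conclusion.) The main obstacle I anticipate is the step asserting $F(\alpha)$ is homotopic rel endpoints into $S$: one needs to use the hypothesis that $F$ injects on simple loops carefully, perhaps replacing $\alpha$ with a loop closing up through $S$ and arguing that its being nullhomotopic in $M$ (because it is $F$-inessential and $M$ has the relevant torsion-free-ish fundamental group structure from the $\sol$ geometry, though here $M$ is only assumed an orientable torus semi-bundle) pins down the image; getting the logic exactly right there, rather than the surgery mechanics, is the delicate part.
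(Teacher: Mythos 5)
Your overall strategy (minimize $|L|$, surger along arcs via Lemma~\ref{lem:TSBhomotopy}, then read off the normal form) matches the paper, but the engine you use to produce surgerable arcs is the wrong one, and this breaks the proof. You import the co-orientation bookkeeping from Lemma~\ref{lem:MapsToCircle}, but in a torus semi-bundle $S$ \emph{separates} $M$, so every boundary circle of a given component $\S_0$ of $\SmL$ is co-oriented the same way (the co-orientation just records which of $N_1$, $N_2$ the component maps into); your criterion would therefore ``reduce'' every component with two or more boundary circles, which is false. The actual obstruction to applying Lemma~\ref{lem:TSBhomotopy} is not co-orientation but the monodromy $\rho_i : \pi_1 N_i \to \Z/2$: an arc $\alpha$ with $F(\alpha)$ a loop in $N_i$ based on $S$ is homotopic rel endpoints into $S$ if and only if $\rho_i[F(\alpha)]=0$, and this can fail. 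The paper gets a $\rho$-trivial arc by additivity of $\rho_i$ under concatenation: with three boundary circles one of the three arcs $\alpha,\alpha',\alpha''$ (with $\alpha''\simeq\alpha\cdot\alpha'$) must be $\rho$-trivial; with two boundary circles and positive genus, either some simple loop is $\rho$-nontrivial (split it into two arcs, one of which is $\rho$-trivial), or \emph{every} simple loop in $\S_0$ maps $\rho$-trivially, in which case a simple commutator loop $\beta=[\gamma,\delta]$ maps into the abelian group $\pi_1 S$ and hence dies --- and \emph{this} is where the ``injects on simple loops'' hypothesis is used. Your invocation of that hypothesis runs backwards (you try to use it to conclude a loop \emph{is} in the kernel), and without the $\rho_i$ argument you have no justification that $F(\alpha)$ is homotopic into $S$.

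Two further concrete errors. First, you cannot ``push a component with one boundary circle across $S$'': $M\less S$ is two twisted $I$-bundles, not a product, and a piece on which $\rho_i\circ F_*$ is nontrivial cannot be homotoped off $N_i$; indeed the final normal form necessarily retains two such one-boundary ``cap'' pieces ($\S_1$ and $\S_2$ in Figure~\ref{fig:baguette}), whereas your moves would drive $L$ to $\emptyset$, which is false (see Case 1 of Lemma~\ref{lem:SLCforGenus2intoTSB}, where $L$ is a nonempty essential separating curve). Second, your endgame is internally inconsistent: you conclude every component of $\SmL$ is an annulus, observe this contradicts $\chi(\S)<0$, and then assert without argument that there are two non-annular caps. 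The correct conclusion from the reductions is that every component is either an annulus or has exactly one boundary circle; since the dual graph is a path (not a cycle, by $\chi(\S)<0$), the curves of $L$ are parallel and separating.
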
 
	
	Figure \ref{fig:baguette} shows a typical picture of $L \subset \S$ when $L \ne \emptyset$.
	\begin{figure}[t]
		\begin{center}
			\begin{overpic}[width=0.8\textwidth]{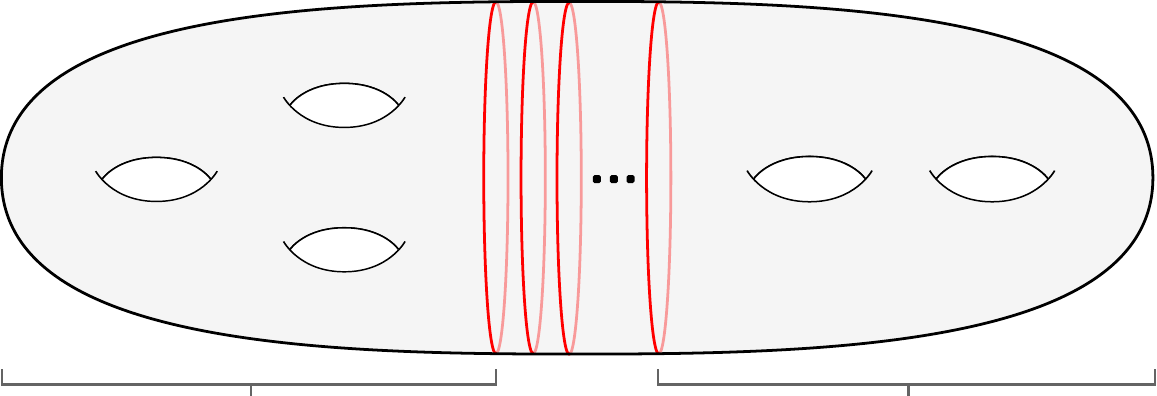}
				\put (50,0) {$L$}
				\put (20.5,-3.5) {$\S_1$}
				\put (77.5,-3.5) {$\S_2$}				
			\end{overpic}
		\end{center}
		\caption{The multicurve $L$ is a collection of parallel loops separating $\S$ into 
					a collection of annuli along with two punctured surfaces, $\S_1$ and $\S_2$.}
		\label{fig:baguette}
	\end{figure}
	
\begin{proof}
	In the notation of Section \ref{sec:TSBdefs}, let $M = N_1 \cup_\phi N_2$ with monodromies 
	$\rho_i : \pi_1 N_i \to \Z/2$.	
	Choose $F$ within its homotopy class so that $F$ is transverse to $S$ and so that $L = F\inv(S)$ is a 
	\emph{minimal} collection of 2-sided simple loops in $\S$.
	
	\textsc{Step 1.} 
	First, suppose that some component $\S_0$ of $\SmL$ has three or more boundary components.
	Let $C_1$, $C_2$, $C_3$ be three of the boundary components of $\S_0$.  
	(Since $S$ separates $M$, no two of the $C_i$ correspond to the same component of $L$.)
	Choose a basepoint $q \in S$; after a homotopy of $F$ supported in a tubular 
	neighborhood of the $C_i$, we may assume that each $C_i$ contains a point $p_i$ for which $F(p_i) = q$.  
	In $\S_0$ choose simple arcs $\alpha$ from 
	$p_1$ to $p_2$, $\alpha'$ from $p_2$ to $p_3$, and $\alpha''$ from $p_1$ to $p_3$ such that $\alpha''$ is
	path-homotopic to the concatenation of $\alpha$ and $\alpha'$, as shown in Figure 
	\ref{fig:three_arcs}.
	\begin{figure}[t]
		\begin{center}
			\begin{overpic}[width=0.7\textwidth]{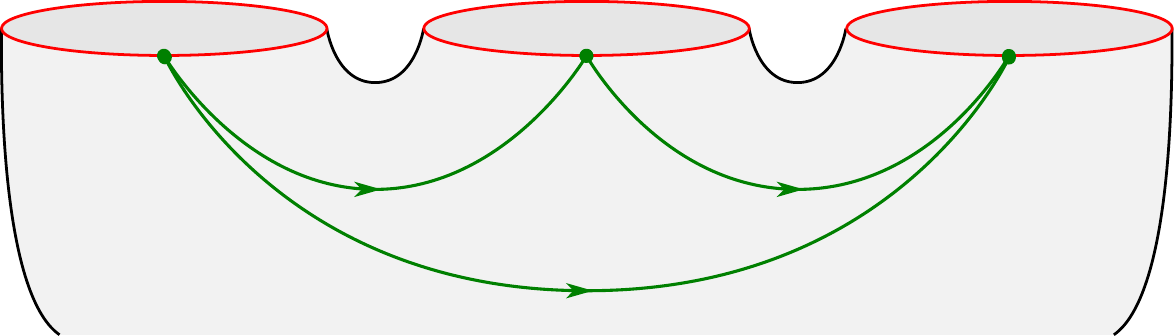}
				\put (02,30) {$C_1$}
				\put (38,30) {$C_2$}
				\put (74,30) {$C_3$}
				\put (13,25.5) {\small$p_1$}
				\put (49,25.5) {\small$p_2$}
				\put (85,25.5) {\small$p_3$}								
	 			\put (30,15) {$\alpha$}
	 			\put (66,15) {$\alpha'$}
	 			\put (48,7) {$\alpha''$}
				\put (100, 0) {$\S_0$}
			\end{overpic}
		\end{center}
		\caption{The arcs $\alpha$, $\alpha'$, and $\alpha''$ joining the boundary components of $\S_0$.}
		\label{fig:three_arcs}
	\end{figure}
	By construction, each of $F(\alpha)$, $F(\alpha')$, and $F(\alpha'')$ are loops in $M$ based at $q$, 
	and without
	loss of generality all three lie in $N_1$.  It follows that $\rho_1[F(\alpha)]$, $\rho_1[F(\alpha')]$, and 
	$\rho_1[F(\alpha'')]$ are elements in $\Z/2$ with 
	$\rho_1[F(\alpha)] + \rho_1[F(\alpha')] = \rho_1[F(\alpha'')]$,
	and so one of the three elements must be trivial in $\Z/2$.  Hence one of the arcs (say $\alpha$) in 
	$\S_0$ has image under $F$ that is homotopic into $\bd N_1 = S$, and so by Lemma \ref{lem:TSBhomotopy} 
	we can homotope $F$ so that the result on $L$ is surgery along $\alpha$, which reduces the number 
	of components of $L$.
	
	\textsc{Step 2.} 
	Next, suppose that some component $\S_0$ of $\SmL$ has two boundary components and is not an annulus.
	As in the previous step, we can homotope $F$ in a neighborhood of $\bd \S_0$ so that each boundary
	component has a point $p_i$ ($i = 1,2$) that maps to the basepoint $q \in S$.  Without loss of generality
	we assume that $F(\S_0) \subset N_1$.  There are two cases to consider.
	
	\textsc{Case 2A.} There is a simple loop $\alpha \subset \S_0$ based at $p_1$ with $\rho_1[F(\alpha)]$ 
	nontrivial in $\Z/2$.  Homotope $\alpha$ in $\S_0$ so that $\alpha$ becomes the concatenation of two simple 
	arcs $\alpha'$ and $\alpha''$ from $p_1$ to $p_2$, as shown in Figure \ref{fig:arcs_in_tube1}.
	\begin{figure}[t]
		\begin{center}
			\begin{overpic}[width=0.36\textwidth]{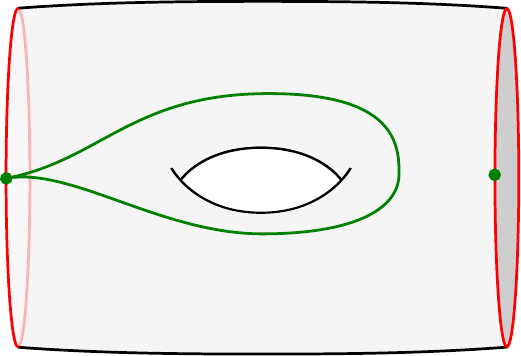}
				\put (-7,33) {\small$p_1$}
				\put (98,33) {\small$p_2$}							
	 			\put (50,18) {$\alpha$}
				\put (101, 0) {$\S_0$}
			\end{overpic}
			\SNAKE{1.5cm}
			\begin{overpic}[width=0.36\textwidth]{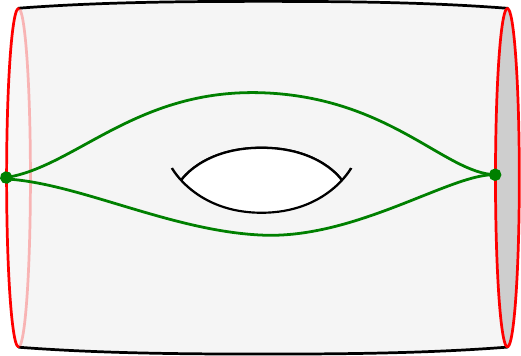}
				\put (-7,33) {\small$p_1$}
				\put (98,33) {\small$p_2$}							
	 			\put (45,53) {$\alpha'$}				
	 			\put (50,17) {$\alpha''$}
				\put (101, 0) {$\S_0$}
			\end{overpic}
		\end{center}
		\caption{Pulling $\alpha$ towards $p_2$ and viewing it as two arcs.}
		\label{fig:arcs_in_tube1}
	\end{figure}
	It follows that $F(\alpha')$ and $F(\alpha'')$ are loops in $N_1$ based at $q$, and since
	$\rho_1[F(\alpha')] + \rho_1[F(\alpha'')] = \rho_1[F(\alpha)]$ is nontrivial in $\Z/2$, one of 
	$\rho_1[F(\alpha')]$ and $\rho_1[F(\alpha'')]$ must be trivial.  As before, an arc with trivial image can 
	be used (Lemma \ref{lem:TSBhomotopy}) to homotope $F$ surger $L$, which reduces the number of components of 
	$L$ by one.
	
	\textsc{Case 2B.} For every simple loop $\alpha \subset \S_0$ based at $p_1$, $\rho_1[F(\alpha)]$ is trivial.
	Since we assumed $\S_0$ is not an annulus, it is a twice-punctured orientable surface of genus 
	greater than 0.  It follows that we can find two simple loops $\gamma$ and $\delta$ in $\S_0$ whose commutator 
	in $\pi_1 \S_0$ is represented by a simple loop $\beta$; see Figure \ref{fig:arcs_in_tube2}.
	\begin{figure}[t]
		\begin{center}
			\begin{overpic}[width=0.4\textwidth]{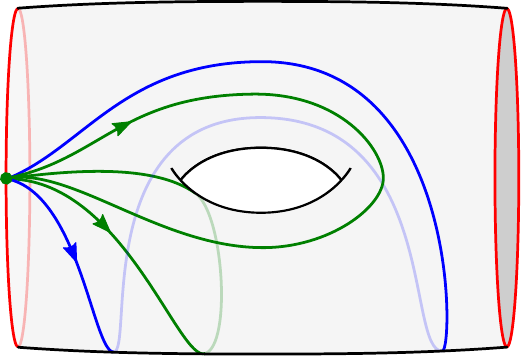}
				\put (-7,33) {\small$p_1$}
	 			\put (34,08) {$\gamma$}
	 			\put (48,15) {$\delta$}
	 			\put (48,60) {$\beta$}
				\put (100,0) {$\S_0$}
			\end{overpic}
		\end{center}
		\caption{The simple loop $\beta$ represents the commutator of $[\gamma]$ and $[\delta]$.}
		\label{fig:arcs_in_tube2}
	\end{figure}
	Since $[\beta], [\gamma], [\delta] \in \pi_1 \S_0$ all have trivial image under $\rho_1 \circ F_*$, 
	$\rho_1[F(\beta)]$, $\rho_1[F(\gamma)]$, and $\rho_1[F(\delta)]$ must lie in the subgroup of $\pi_1 N_1$ 
	corresponding to the boundary $S$.  But since $\pi_1S$ is abelian, the commutator $F_*[\beta]$ is trivial.  
	This contradicts the assumption that $F$ injects on simple loops, and so it is impossible that 
	$\rho_1 \circ F_*$ is trivial on every simple loop in $\S_0$.  
	
	We conclude that the number of components of 
	$L$ can be reduced whenever some component of  $\SmL$ has exactly two boundary components and is not 
	an annulus.
	
	\textsc{Step 3.}
	It follows from the previous two steps that if $F$ is chosen in its homotopy class so that $L$ has
	a minimal number of components, then $L$ is either empty or every component of $\SmL$ is either an
	annulus or a surface with exactly one boundary component.  The assumption that $\chi(\S) < 0$ rules out the 
	possibility that \textit{every} component of $\SmL$ is an annulus, and so $\S$ consists of two 
	punctured orientable surfaces connected by some number of annuli. 
\end{proof}

\begin{proof}[Proof of Theorem \ref{thm:SLCforTSBs_strong}]
	Let $\S$ be a closed surface with $\chi(\S) < -2$, let $M = N_1 \cup_\phi N_2$ be a torus semi-bundle, and let 
	$F: \S \to M$ be a 2-sided map.  By Lemma \ref{lem:MapsToTSBs}, we may assume that $F$ has been homotoped so 
	that $L = F\inv(S)$ is either empty or is a collection of parallel curves as in Figure 
	\ref{fig:baguette}.  (According to the lemma, if this is not possible then we can already find a 
	simple loop in $\ker F_*$.)  
	
	If $L = \emptyset$ then without loss of generality $F$ has image in $N_1$, which is homotopy equivalent
	to a Klein bottle.	Since $\pi_1 N_1$ does not contain the fundamental group of any surface of negative Euler
	characteristic, the induced map $\pi_1 \S \to \pi_1 N_1$ has nontrivial kernel.  Using Gabai's result
	\cite{Gabai1985}, we conclude that there is a simple loop in the 
	kernel of $F_*$.
	
	We now consider the case in which $L \ne \emptyset$.  If $\S_1$ and $\S_2$ are the two non-annular 
	subsurfaces of $\S$ as shown in Figure \ref{fig:baguette}, then 
	\[	 
		\chi(\S_1) + \chi(\S_2) = \chi(\S).
	\]
	It follows that either $\chi(\S_1) < -1$ or $\chi(\S_2) < -1$. 
   
	Without loss of generality, we will henceforth assume that $\chi(\S_1) < -1$ and
	that $F(\S_1) \subset N_1$.
	
	If $f = \rho_1 \circ (F|_{\S_1})_* : \pi_1(\S_1) \to \Z/2$, then since $F$ sends $\bd \S_1$ (which is a 
	component of $L$) into $S$, we have $f[\bd \S_1] = 0$.  It follows that $f$ represents a class in
	$H^1(\S_1, \bd\S_1; \Z/2)$.  If $f$ represents the trivial class, then all of $F(\S_1)$ is homotopic into 
	$S$, and we can homotope $F$ to send all of
	$\S_1$ past $S$ and reduce the number of components of $L$, contradicting the assumption that $F$ has already
	been homotoped to minimize the number of components. 
	Therefore $f$ is nontrivial in $H^1(\S_1, \bd\S_1; \Z/2)$, and so by Lefschetz Duality, there is a nontrivial 
	homology class $f_* \in H_1(\S_1; \Z/2)$ for 
	which the value of $f$ on any loop $\alpha$ based on $\bd \S_1$ is given by the signed intersection 
	($\bmod\ 2$) of $\alpha$ with any $1$-chain representing of $f_*$.  
	
	Let $\ell$ be a simple loop in $\S_1$
	that represents $f_*$. (A simple loop representative exists by \cite{Meyerson1976}.)
	Since $f_*$ is nontrivial, $\ell$ is essential and every loop in $\S_1 \less \ell$ 
	is in the kernel of $f$ and therefore has image in $N_1$ that is homotopic into $S$.  
	The fact that $\chi(\S_1) < -1$ implies that $\S_1 \mless \ell$ is homeomorphic to 
	a closed surface of genus at least one with three open discs removed.  
	As in the proof of 
	Theorem \ref{thm:SLCforTBs_strong}, we can find an embedded punctured torus $P$ in $\S_1 \mless \ell$ whose
	boundary $\beta$ represents the commutator of simple loops $\gamma$ and $\delta$ contained in $P$.  Since
	$[\beta]$, $[\gamma]$, and $[\delta]$ all have image under $F_*$ in the abelian subgroup 
	$\pi_1 S \le \pi_1 M$, we conclude that $\beta$ is the desired simple loop representing an element of 
	$\ker F_*$.
\end{proof}
	
With Proposition \ref{prop:SLCforKleinbottles} and the proof of the Simple Loop Conjecture when the domain
	is a torus given in \cite{Hass1987}, we will complete the proof of Theorem \ref{thm:SLCforTSBs} with the
	following special case.
	
\begin{lemma}
	\label{lem:SLCforGenus2intoTSB}
	Let $\S$ denote the closed orientable surface with $\chi(\S) = -2$.  If $M$ is an orientable torus semi-bundle
	and $F:\S \to M$ is a (2-sided) map, then either there is a essential simple loop in $\ker F_*$ or $M$ does
	not admit a geometric structure modeled on $\sol$.
\end{lemma}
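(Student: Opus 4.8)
The plan is to run the machinery of Lemma \ref{lem:MapsToTSBs} on the genus-$2$ surface $\S$ and show that the only configurations that survive the minimization force the gluing map of $M$ to be degenerate, i.e.\ to have a zero entry in canonical coordinates, so that $M$ does not admit a $\sol$ structure by Theorem \ref{thm:solvmfldClassification}. So suppose $F$ injects on simple loops (otherwise we are done) and homotope $F$ so that $L = F^{-1}(S)$ is minimal. By Lemma \ref{lem:MapsToTSBs}, $L$ is empty or a collection of parallel separating essential simple loops cutting $\S$ into two punctured surfaces $\S_1, \S_2$ joined by annuli. If $L = \emptyset$ then $F$ maps into one $N_i$, which is homotopy equivalent to a Klein bottle; as in the proof of Theorem \ref{thm:SLCforTSBs_strong}, $\pi_1 N_i$ contains no genus-$2$ surface group, so $F_*$ has nontrivial kernel and Gabai's theorem \cite{Gabai1985} produces a simple loop in $\ker F_*$. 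So assume $L \ne \emptyset$; since $\chi(\S) = -2$ and each $\S_j$ is a once-punctured surface, the only way neither $\chi(\S_j) < -1$ is if $\S_1$ and $\S_2$ are each once-punctured tori (each of Euler characteristic $-1$), joined by one or more annuli.

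First I would rule out the possibility that some $\S_j$ satisfies $\chi(\S_j) < -1$: the argument at the end of the proof of Theorem \ref{thm:SLCforTSBs_strong} goes through verbatim — pick the nontrivial Lefschetz dual class, represent it by a simple loop $\ell$, find an embedded punctured torus in $\S_j \mless \ell$, and its boundary is a commutator that dies in the abelian group $\pi_1 S$. Hence the remaining case is exactly: $\S = \S_1 \cup (\text{annuli}) \cup \S_2$ with each $\S_j$ a once-punctured torus, $F(\S_1) \subset N_1$, $F(\S_2) \subset N_2$, and — by the minimality argument from Lemma \ref{lem:MapsToTSBs}, Step 2 / the argument ruling out $f$ trivial in the proof above — the class $f_j = \rho_j \circ (F|_{\S_j})_* \in H^1(\S_j, \bd\S_j; \Z/2)$ is nontrivial. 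The point is that on a once-punctured torus, a nontrivial relative $\Z/2$-cohomology class is represented by a \emph{primitive} homology class, so there is a simple loop $\ell_j \subset \S_j$ with $\S_j \mless \ell_j$ an annulus, whose core is a nonseparating simple loop generating $\pi_1 \S_j / \langle \text{loops into } S\rangle$. Cutting $\S$ along all of $L$ together with the $\ell_j$ realizes $\pi_1 \S$ as generated by two elements mapping into $\pi_1 N_1$ and two into $\pi_1 N_2$, with the images of the $\ell_j$ the only ones not landing in $\pi_1 S$.

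The hard part — and the heart of the lemma — will be extracting the arithmetic constraint on $\phi$. The idea is that $F_*(\pi_1 \S)$ is generated by $\pi_1 S$-conjugates of (i) all the boundary/annulus curves, which lie in $\pi_1 S \cong \Z^2$, and (ii) the two curves $\ell_1, \ell_2$, one of which reflects $\pi_1 S$ across the $\rho_1$ involution and the other across $\rho_2$. Writing $\pi_1 N_i$ as an extension of $\Z/2$ by $\Z^2$ where the nontrivial element acts by $-I$ on homology, $\ell_i$ acts on the $\Z^2 = \pi_1 S$ by $v \mapsto -v + c_i$ for a translation part $c_i$ determined by the relative class $f_i$. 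The subgroup of $\mathrm{Isom}(\Z^2)$ generated by $\pi_1 S = \Z^2$ together with these two affine involutions is all of $\pi_1 M$ (acting on the generic fiber), and $\ker F_*$ being trivial on simple loops but $F$ not $\pi_1$-injective (which we may assume, else nothing to prove) forces a relation; unwinding it in the semi-bundle presentation, with $\phi$ the gluing matrix in canonical coordinates, yields that the commutator-type obstruction $[\ell_1, \ell_2]$ or a product $\ell_1 \ell_2$ maps to a finite-order or trivial element only when some entry of $\phi$ vanishes. Concretely: the relevant element of $\pi_1 M$ is conjugate, via the semi-bundle structure, to one whose action on $\R$ (the $I(2,2)$ base) is a translation whose length is governed by $rstu$; its vanishing on the fiber $\Z^2$ is equivalent to $rstu = 0$. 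I expect the cleanest route is to compute directly in $\pi_1 M = \pi_1 N_1 *_{\Z^2} \pi_1 N_2$ using canonical coordinates: take $\ell_1 = y_1$ (the $I$-bundle reflection in $N_1$) and $\ell_2 = y_2$, note $y_i^2$ and the boundary curves generate $\pi_1 S$, and show that the existence of a genus-$2$ surface group mapping in with no simple loop in the kernel forces $y_1 y_2$ to have the wrong translation length unless an off-diagonal or diagonal product in $\phi$ degenerates — at which point $M$ is Seifert-fibered or Euclidean, hence not $\sol$ by Theorem \ref{thm:solvmfldClassification}. The bookkeeping of the conjugating elements from $\pi_1 S$ (which can shift $c_i$) is the main technical obstacle, and I would handle it by passing to the abelianization of $\ker(\pi_1 M \to D)$, where the whole question becomes a rank computation over $\Z$ for the $2\times 2$ matrix built from $\phi$.
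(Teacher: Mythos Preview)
Your setup through Lemma~\ref{lem:MapsToTSBs} is fine, and you correctly identify that the only new case is $\S_1,\S_2$ both once-punctured tori. But the heart of the argument is both simpler and more specific than what you sketch, and your endgame is not actually a proof.

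The key observation you are missing is that the commutator subgroup of $\pi_1 N_i = \langle x_i, y_i \mid x_i y_i x_i^{-1} y_i = 1\rangle$ is exactly $\langle y_i^2\rangle$. Since $\bd\S_i$ represents $[a_i,b_i]$ with $F_*(a_i),F_*(b_i)\in\langle x_i,y_i\rangle$, one gets $F_*([a_i,b_i]) = y_i^{2k_i}$ for some integers $k_i$, \emph{without} any need to locate special loops $\ell_j$ or invoke Lefschetz duality. When $L$ is connected the surface relation $[a_1,b_1]=[a_2,b_2]$ gives $y_1^{2k_1}=y_2^{2k_2}$; substituting the gluing relation $y_1 = x_2^{2s}y_2^{u}$ (in the abelian subgroup $\langle x_2^2,y_2\rangle$) yields $4sk_1=0$. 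So either $k_1=0$ and $L$ itself is an essential simple loop in $\ker F_*$, or $s=0$ and $M$ is not $\sol$ by Theorem~\ref{thm:solvmfldClassification}. Note in particular that no non-injectivity hypothesis is needed, and only the single entry $s$ is implicated, not the product $rstu$.

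Your proposal does not handle the case $\abs{L}>1$. When there are $n>0$ intervening annuli, the surface relation becomes $[a_1,b_1] = w\,[a_2,b_2]\,w^{-1}$ for an arc $w$ crossing all components of $L$, and $F_*(w)$ is an alternating word in the $x_i$-cosets; the clean equation $y_1^{2k_1}=y_2^{2k_2}$ is no longer immediate. The paper deals with this by lifting $F$ to a torus semi-bundle cover $\hat M$ of $M$ corresponding to a dihedral subgroup $H\le D$ chosen so that the lifted $L$ meets a middle torus of $\hat M$ in a single component, reducing to the connected case. Your suggestion to ``pass to the abelianization of $\ker(\pi_1 M\to D)$'' is pointing at the same normal $\Z^2$, but that kernel is already abelian and the bookkeeping with the conjugating word $w$ still has to be done; you have not indicated how.

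Two smaller issues: in the paper's presentation the $I$-bundle ``reflection'' generator is $x_i$ (with $\rho_i(x_i)\ne 0$), not $y_i$; and your ``wrong translation length'' heuristic for $y_1y_2$ (really $x_1x_2$) does not by itself produce an equation in the entries of $\phi$ --- translation length on the $D$-quotient sees only the parity pattern, not the gluing matrix. The actual constraint on $\phi$ comes from the fiber direction, via the commutator computation above.
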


\begin{proof}
	By Lemma \ref{lem:MapsToTSBs}, we can homotope $F$ so that the preimage $L = F\inv(S)$
	of the middle torus of $M$ is a minimal collection of parallel curves in $\S$ as in Figure 
	\ref{fig:baguette}.  As in the proof of Theorem \ref{thm:SLCforTSBs_strong} we may also 
	assume that $L \ne \emptyset$, so $L$ separates $\S$ into punctured tori $\S_1$ and $\S_2$ along with a 
	collection of $n = \abs{L}-1$ annuli.
	
	\textsc{Case 1}: $n = 0$.
	In this case, $L$ is connected and separates $\S$ into punctured tori 
	$\S_1$ and $\S_2$.  We can write the fundamental group of $\S$ as 
	\[
		\pi_1 \S = \ang{a_1, b_1, a_2, b_2 \mid [a_1,b_1] = [a_2,b_2]},
	\]
	where $a_i$ and $b_i$ are the generators of the fundamental group of $\S_i$.  The fundamental group of $M$
	has presentation
	\[
		\pi_1 M = \ang{x_1, y_1, x_2, y_2 \mid x_i y_i x_i\inv y_i = 1, 
				x_1^2 = x_2^{2r} y_2^t, y_1 = x_2^{2s} y_2^u },
	\]
	where $x_i$ and $y_i$ are the generators of the fundamental group of the twisted $I$-bundle over a Klein 
	bottle $N_i$, and $M$ has been constructed by gluing $N_1$ to $N_2$ via a homeomorphism $\bd N_1 \to \bd N_2$
	whose matrix is 
	\[
		\begin{pmatrix} r & s \\ t & u \end{pmatrix} \in GL_2(\Z)
	\]
	with respect to the bases $\ang{x_i^2, y_i}$ of the fundamental groups of the boundaries of the $N_i$.  
	By the definition of $L$ we see that $F$ restricts to a proper map of $\S_i$ into $N_i$, and so $F_*(a_i)$ and 
	$F_*(b_i)$ must lie in $\ang{x_i, y_i}$ for $i = 1,2$.  The subgroup $\ang{x_i,y_i}$ of $\pi_1M$ is isomorphic 
	to the fundamental group of a Klein bottle, and its commutator subgroup is infinite cyclic with generator 
	$y_i^2$.  Hence the commutators $[a_i,b_i]$ are mapped to even powers of $y_i$, and from the relation in 
	$\pi_1 \S$ we obtain an equation
	\[
		y_1^{2k_1} = y_2^{2k_2}
	\]
	for some integers $k_1$ and $k_2$.
	Applying the rightmost relation of the presentation of $\pi_1M$ given above, we have
	\[
		x_2^{4sk_1} y_2^{2uk_1} = y_2^{2k_2}.
	\]
	Since this is an equation in $\ang{x_2^2, y_2} \iso \Z \oplus \Z$, we can conclude that $4sk_1 = 0$,
	and so either $k_1 =0$ or $s=0$.  If $k_1 = 0$, it follows that the curve $L$
	(which represents the elements $[a_1,b_1]$ and $[a_2,b_2]$ in $\pi_1\S$) has image $y_1^{2k_1} = 1$,
	so $L$ is a essential simple loop in the kernel of $F_*$.  If $s =0$, then by Theorem 
	\ref{thm:solvmfldClassification} it follows that $M$ does not admit a geometric structure modeled on $\sol$.
	
	\textsc{Case 2}: $n > 0$.
	In this case, $L$ has multiple components; we will show that $F$ can be lifted to a torus semi-bundle cover of 
	$M$ in which the preimage of the middle torus is connected, thereby reducing to the case in which $n = 0$.
	Choose points $p_0, \ldots, p_{n}$ on the $n+1$ components of $L$, and let $\alpha \subset \S$ be a simple
	arc with end points at $p_0$ and $p_n$ whose intersection with $L$ is the points $p_i$.  For 
	$i = 0, \ldots, n-1$
	let $\alpha_i$ denote the segment of $\alpha$ between $p_{i}$ and $p_{i+1}$, as shown in Figure
	\ref{fig:baguette_special}.
	\begin{figure}[t]
		\begin{center}
			\begin{overpic}[width=0.8\textwidth]{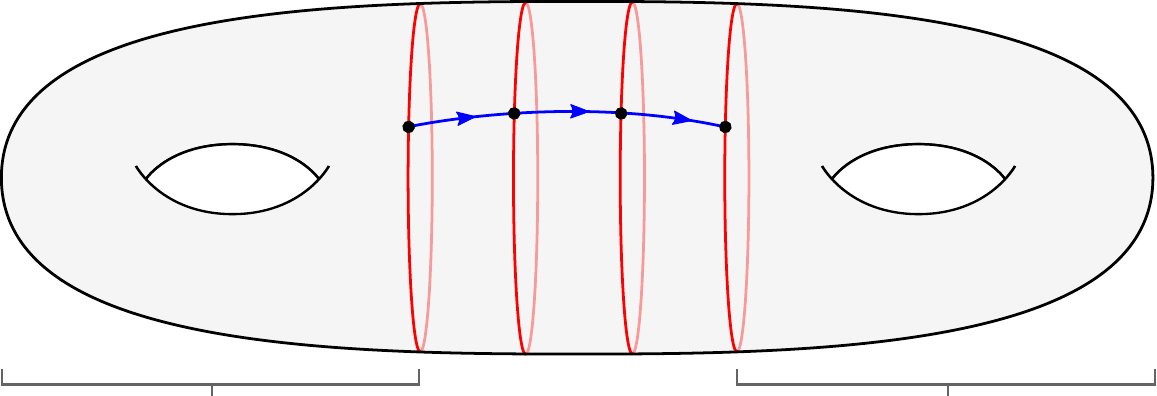}
				\put (32,22)   {\tiny $p_0$}
				\put (45,22.5)   {\tiny $p_1$}
				\put (54.3,22.5)   {\tiny $p_2$}
				\put (63.5,22)   {\tiny $p_3$}
				
				\put (39,25.3)   {\tiny $\alpha_0$}
				\put (49,25.8)   {\tiny $\alpha_1$}
				\put (57.7,25.3)   {\tiny $\alpha_2$}
				
				\put (17,-3.5) {$\S_1$}
				\put (81,-3.5) {$\S_2$}				
			\end{overpic}
		\end{center}
		\caption{The arc $\alpha$ connecting the points $p_i$ in the case $n = 3$.}
		\label{fig:baguette_special}
	\end{figure}
	By adjusting $F$ by a homotopy that preserves $L$, we may assume that $F(p_i) = q$ for some basepoint
	$q \in S \subset M$, and so $F(\alpha_i)$ is a loop in $M$ based at $q$ representing an element 
	$w_i \in \pi_1M$.  
	
	In the notation of the previous case, we assume that $F_*(a_1)$ and $F_*(b_1)$ lie in 
	$\ang{x_1, y_1} \le \pi_1M$, and by the definition of $L$ we have that $w_i \in \ang{x_{j_i},y_{j_i}}$ where
	$j_i = 1$ if $i$ is odd and $j_i = 2$ if $i$ even.  We may also assume that 
	$w_i \notin \ang{x_{j_i}^2,y_{j_i}}$, for if $w_i \in \ang{x_{j_i}^2,y_{j_i}}$ then $\alpha_i$ is a proper 
	simple arc in a component $\SmL$ with image homotopic into $S$, and we can reduce the number of 
	components of $L$, which contradicts the minimality assumption.  If $w = w_0\cdots w_{n-1}$, then we have
	\[
		F_*(\pi_1\S) \le \ang{x_1, y_1, w x_k w\inv, w y_k w\inv},
	\]
	where $k = 1$ if $n$ is odd and $k = 2$ if $n$ is even.  
	
	If $D = \ang{g_1, g_2 \mid g_1^2 = g_2^2 = 1}$ denotes the infinite dihedral group, then there is a 
	homomorphism $f: \pi_1 M \to D$ given by $x_i \mapsto g_i$ and $y_i \mapsto 1$ for $i = 1,2$.  The cover of 
	$M$ corresponding to $\ker f$ is $T \times \R$ with deck group $D$,
	as described in Section \ref{sec:TSBdefs}.  For 
	each $i = 0, \ldots, n-1$, since $w_i \notin \ang{x_{j_i}^2, y_{j_i}}$ we have $f(w_i) = g_{j_i}$, and it 
	follows that $f(w)$ is a reduced word in $D$ of length $n$ starting with $g_2$.  The image of $\pi_1 \S$ under
	the composition $f \circ F_*$ is the subgroup
	\[
		H = \ang{g_1, f(w) g_k f(w)\inv} \le D,
	\]
	which itself is isomorphic to the infinite dihedral group.  
	Let $\hat M$ be the quotient of $S \times \R$ by $H$, which is another torus semi-bundle that is
	the cover of $M$ corresponding to the subgroup $f\inv(H)$.
%
	Then $\hat M$ contains $n+1$ tori $S_0, \ldots, S_n$ that are lifts of $S$, and 
	the result of splitting $\hat M$ along these tori is $n$ 
	products $T \times I$ (each of which double-covers $N_1$ or $N_2$) along with two twisted $I$-bundles 
	over a  Klein bottle (each of which projects to $N_1$ or $N_2$ by a homeomorphism).
%
	The $S_i$ are parallel and one can show that $\hat F\inv(S_i)$ is connected for $i = 0, \ldots, n$,
	where $\hat F : \S \to \hat M$ is the lift of $F$ to $\hat M$.  
	Hence we can take any of the $S_i$ to be the ``middle torus'' of $\hat M$.

	Therefore we may apply the argument of the first case of this proof to $\hat F$ to find either a essential 
	simple loop in $\ker \hat{F}_*$ or that $\hat M$ is Seifert fibered.  In the former case, an essential simple 
	loop in $\ker \hat F_*$ is also an essential simple loop in $\ker F_*$. In the latter, if $\hat M$ is Seifert 
	fibered then it carries a Euclidean or \textit{Nil} structure, and therefore so does $M$.  It follows that $M$ 
	is Seifert fibered as well.	
\end{proof}

\section{The Simple Loop Conjecture for Metabelian Groups}
	\label{sec:SLCforMetabelianGroups}

	An \emph{orientation character} on a group $G$ is a homomorphism $\rho_G : G \to \Z/2$, and an 
	\emph{oriented group} is a pair $(G, \rho_G)$ where $\rho_G$ is an orientation on $G$.  When $G$ is the 
	fundamental group of a manifold $M$, we take $\rho_G$ to be the orientation character $\rho_M$ defined 
	in Section \ref{sec:defs}.  Similarly, one can say what it means for a homomorphism between two oriented
	groups to be \emph{2-sided}.  It then seems natural to ask if the following
	generalization of the Simple Loop Conjecture holds for a fixed oriented group $G$.
	
	\begin{statement*}
		Let $\S$ be a closed surface and let $(G, \rho_G)$ be an oriented group.  
		If $f : \pi_1\S \to G$ is a 2-sided homomorphism that is not 
		injective, then there is an essential simple loop in $\S$ that represents an element of the kernel of $f$.
	\end{statement*}
	
When $G$ is the fundamental group of an aspherical 3-manifold this is equivalent to 
	the Simple Loop Conjecture for 3-manifolds.  This statement is known to be false when $G = \mbox{PSL}(2,\C)$
	by work of Cooper-Manning \cite{Cooper2011} and when $G = \mbox{PSL}(2,\R)$ by work of Mann \cite{Mann2014}.
	(In both cases, $G$ carries the trivial orientation character as it is identified with the groups
	of orientation-preserving isometries of hyperbolic 3- and 2-space, respectively.)
	
A group is called \emph{metabelian} if it fits into a short exact sequence of the form
	\[
		1 \longrightarrow A \longrightarrow G \longrightarrow B \longrightarrow 1,
	\] 
	where $A$ and $B$ are abelian groups.  For example, the fundamental groups of the torus bundles treated in 
	Section \ref{sec:TBs} are metabelian with $A = \Z \oplus \Z$ and $B = \Z$.  One might be led to ask
	if the group-theoretic version of the Simple Loop Conjecture holds for metabelian groups, and if a 
	technique similar to that of Section \ref{sec:TBs} can be used to prove it.  We provide the following
	result in this direction.
	
\begin{theorem}
	\label{thm:SLCforZextensions}
	Let $(G,\rho_G)$ be an oriented group that fits into an exact sequence of the form
	\[
		1 \longrightarrow A \longrightarrow G \longrightarrow \Z \longrightarrow 1,
	\]	
	where $A$ is abelian, and suppose that $A \le \ker \rho_G$.
	If $\S$ is a closed surface of genus at least two, then the group-theoretic version of the Simple Loop 
	Conjecture holds for $\S$ and $G$.  
\end{theorem}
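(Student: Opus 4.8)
The plan is to follow the proof of Theorem~\ref{thm:SLCforTBs_strong} almost verbatim, with the bundle projection $M\to S^1$ replaced by the quotient homomorphism $\pi\colon G\to\Z$ coming from the hypothesized extension. Let $f\colon\pi_1\S\to G$ be a $2$-sided homomorphism; I may as well drop the noninjectivity hypothesis, since $G$ is metabelian and a closed surface group of genus at least two is not, so $f$ is automatically noninjective. Let $m\ge 0$ be the integer with $(\pi\circ f)(\pi_1\S)=m\Z$. The first step is a reduction to $m\le 1$: if $m\ge 2$, set $G'=\pi\inv(m\Z)$, a subgroup of $G$ of index $m$ that contains the image of $f$. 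Then $G'$ sits in an extension $1\to A\to G'\to m\Z\to 1$ with $m\Z\iso\Z$, one has $A\le\ker(\rho_G|_{G'})$, the map $f\colon\pi_1\S\to G'$ is still $2$-sided (with respect to $\rho_G|_{G'}$) and has the same kernel, and $\pi|_{G'}\circ f$ is now surjective onto $m\Z$. So, replacing $G$ by $G'$, I may assume $m\in\{0,1\}$.

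If $m=0$, then $f$ has image in the abelian group $A$. Since $\S$ has genus at least two it contains an embedded once-punctured torus; its boundary $\beta$ is an essential separating simple loop in $\S$ whose class in $\pi_1\S$ is a commutator $[\gamma,\delta]$ of simple loops $\gamma,\delta$ carried by the punctured torus, so $f[\beta]=[f(\gamma),f(\delta)]=1$ and $\beta$ is the required loop.

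Now suppose $m=1$, i.e.\ $\pi\circ f\colon\pi_1\S\to\Z$ is surjective. Since $S^1$ is a $K(\Z,1)$, pick a map $F\colon\S\to S^1$ realizing $\pi\circ f$ on $\pi_1$, and apply Lemma~\ref{lem:MapsToCircle} to homotope $F$ so that $L=F\inv(q)$ is an essential $2$-sided simple loop and $F(\SmL)\subset S^1\less q$. Because $F_*$ is computed by algebraic intersection with $L$ and is surjective, $L$ is nonseparating, so $\SmL$ is connected, has two boundary circles, and $\chi(\SmL)=\chi(\S)$. The crucial observation --- playing the role of \emph{$M\less M_q\iso T\times I$ is orientable} in the torus-bundle argument --- is that $\SmL$ is orientable: every loop $\ell$ in $\SmL$ has $F$-image in the contractible arc $S^1\less q$, so $\pi(f(\ell))=F_*(\ell)=0$ and hence $f(\ell)\in A$; then $2$-sidedness together with $A\le\ker\rho_G$ gives $\rho_\S(\ell)=\rho_G(f(\ell))=0$, so the orientation character of $\SmL$ vanishes. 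Thus $\SmL$ is a compact orientable surface with two boundary circles and $\chi\le-2$, so it has positive genus and contains an embedded once-punctured torus $P$. As in the proof of Theorem~\ref{thm:SLCforTBs_strong}, $\bd P$ is an essential separating simple loop $\beta$ in $\S$ with $[\beta]=[\gamma,\delta]$ for simple loops $\gamma,\delta\subset P\subset\SmL$; since $f(\gamma),f(\delta)$ both lie in the abelian group $A$, we conclude $f[\beta]=1$, as desired.

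I expect the only genuine subtlety to be the orientability of $\SmL$ in the last step: this is the one place where the hypotheses \emph{$A\le\ker\rho_G$} and \emph{$f$ is $2$-sided} are used, and getting it right requires noting that the orientation character of the subsurface $\SmL$ is the restriction of $\rho_\S$. The reduction step for $m\ge 2$ is routine but deserves a careful check that every hypothesis of the theorem is inherited by the finite-index subgroup $\pi\inv(m\Z)$, and one should also confirm the elementary surface bookkeeping (an orientable surface with two boundary circles and $\chi\le-2$ contains a once-punctured torus) used in both the $m=0$ and $m=1$ cases.
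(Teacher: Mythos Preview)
Your proof is correct and follows essentially the same route as the paper's: reduce to the case where $\pi\circ f$ is surjective (the paper does this by replacing $G$ with $f(\pi_1\S)$ rather than with $\pi\inv(m\Z)$, but the effect is the same), realize $\pi\circ f$ by a map to $S^1$ and apply Lemma~\ref{lem:MapsToCircle}, use $2$-sidedness together with $A\le\ker\rho_G$ to see that $\SmL$ is orientable, and then find a simple commutator loop there. The only cosmetic difference is in the $m=0$ case, where the paper observes that \emph{any} separating simple loop lies in $\ker f$ (since such a loop is homologically trivial and $f$ factors through $H_1(\S)$), whereas you exhibit a specific one; note that in this case $2$-sidedness and $A\le\ker\rho_G$ already force $\S$ to be orientable, so your appeal to an embedded once-punctured torus is justified.
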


\begin{proof}
	This will be a group-theoretic analogue to the proof of Theorem \ref{thm:SLCforTBs_strong}.  
	Let $p : G \to \Z$ denote the projection map in the short exact sequence.  For a surface $\S$ and a 2-sided 
	homomorphism $f: \pi_1 \S \to G$, we may assume that $f$ is surjective.  For if not, then either
	$f(\pi_1 \S)$ lies in $A$ and any separating simple loop in $\S$ represents an element of $\ker f$, 
	or $p \circ f$ has nontrivial image and we replace $G$ by 
	$f(\pi_1 \S)$, $\rho_G$ by $(\rho_G)|_{f(\pi_1 \S)}$, $A$ by $A \cap f(\pi_1 \S)$, 
	and $\Z$ by $(p \circ f)(\pi_1 \S) \iso \Z$.
	
	There is a map $\S \to S^1$ whose induced homomorphism on fundamental groups is $p \circ f$, and by applying 
	Lemma \ref{lem:MapsToCircle} to this map we find a simple nonseparating loop $L \subset \S$ such that every 
	element of $\pi_1(\SmL) \le \pi_1 \S$ is contained in $\ker (p \circ f)$.  By exactness, 
	$f(\pi_1(\SmL))$ is contained in $A$, and the assumptions that $f$ is 2-sided and that 
	$A \le \ker \rho_G$ imply that $\SmL$ must be orientable.
	
	As shown in the proof of Theorem \ref{thm:SLCforTBs_strong} there are essential simple loops $\beta$, 
	$\gamma$, and $\delta$ 
	in $\S$ representing elements of $\ker(p \circ f)$ and with $[\beta]$ equal to the commutator of $[\gamma]$ 
	and $[\delta]$.  By exactness, $f[\beta]$, $f[\gamma]$, and $f[\delta]$ are contained in $A$, and since $A$ is 
	abelian we have that $f[\gamma]$ is trivial.
\end{proof}

We conclude by showing that, despite the previous result, the group-theoretic Simple Loop Conjecture does not
	hold for \textit{all} torsion-free metabelian groups.  This is a torsion-free version of a finite example
	due to Casson \cite[Section 2]{Livingston2000}.
	
\begin{example}
	\label{ex:group_slc_counterex}
	\newcommand{\tS}{\hat\S}
	\newcommand{\tbeta}{{\hat\beta}}
	Let $\S$ be a surface of genus $g \ge 2$.  We will give a topological construction of the quotient of 
	$\pi_1\S$ by its second derived subgroup, which is sometimes called the \textit{metabelianization} of 
	$\pi_1\S$.  From the construction we will see that the kernel of $\pi_1 \S \to G$ does not contain
	any elements represented by simple loops in $\S$.
	
	First, let $B = H_1(\S)$ (with $\Z$ coefficients understood), 
	let $f_1 : \pi_1 \S \to B$ be the abelianization map, and let 
	$K_1 = \ker f_1$.  Let $P : \tS \to \S$ be the cover of $\S$ corresponding to $K_1$.  Next, let 
	$f_2 : \pi_1 \tS \to H_1( \tS )$ 
	be the analogous natural map for $\tS$, and let $K_2 = \ker f_2$.  We have $K_2 \le \pi_1 \tS \iso K_1 \le 
	\pi_1 \S$, and so we identify $K_2$ with its image under $P_*$ and consider it a subgroup of $\pi_1 \S$.  
		
	Observe that $K_1$ does not contain any element of $\pi_1\S$
	represented by a nonseparating simple loop in $\S$, but does 
	contain every element represented by a separating simple loop in $\S$.  Hence every separating simple loop in 
	$\S$ lifts to $\tS$; we now show that every such loop lifts to a \textit{nonseparating} simple loop in $\tS$.  

	We first observe that $B \iso \Z^{2g}$ is a one-ended group.  Since $B$ acts properly on $\tS$
	with compact quotient $\S$, it follows that 
	$\tS$ is a one-ended space.  Any inessential separating simple loop in $\tS$ must therefore separate $\tS$ 
	into a compact piece and a noncompact piece.  Hence if $\beta$ is a simple separating loop 
	in $\S$ for which some (and hence any) lift $\tbeta$ of $\beta$ separates $\tS$, then $\tbeta$ cuts off a 
	compact subsurface $\tS_{\tbeta} \subset \tS$.  If $\tbeta'$ is another lift of $\beta$, then $\tbeta$
	and $\tbeta'$ are disjoint, and the regularity of the cover $\tS \to \S$ implies that there is a deck 
	transformation of $\tS$ that takes $\tbeta'$ to $\tbeta$.  This deck transformation must take $\tS_{\tbeta'}$ 
	homeomorphically onto $\tS_{\tbeta}$.  If one of these subsurfaces is contained in the other (say
	$\tS_{\tbeta'} \subset \tS_{\tbeta}$) then $\tbeta$ and $\tbeta'$ must be parallel.  However, this is 
	impossible: for by choosing hyperbolic metrics on $\S$ and $\tS$ so that the covering action is by isometries,
	and choosing $\beta$, $\tbeta$, and  $\tbeta'$ to be the unique geodesics in their homotopy classes, we 
	see that if $\tbeta$ and $\tbeta'$ are parallel then they are not distinct lifts of $\beta$.
	
	It follows that the 
	subsurfaces $\tS_\tbeta$ (as $\tbeta$ ranges over the lifts of $\beta$) must be disjoint.  In particular, 
	each such subsurface does not contain any lifts of $\beta$ in its interior.  Thus the covering map 
	$\tS \to \S$ restricts to a cover of a component of $\S \less \beta$ by $\tS_\tbeta$, and since $\tbeta$ 
	projects to 
	$\beta$ via a homeomorphism, the restricted cover is a homeomorphism.  However, this is 
	impossible, as $\tS_\tbeta$ is not a disk and so must contain a nonseparating simple loop, and this 
	nonseparating loop is a lift of its image under the covering projection.  We have already observed that such loops do not lift from $\S$ to $\tS$, and so from this contradiction we conclude 
	that $\tbeta$ (and hence every lift of $\beta$ to $\tS$) must be nonseparating.
	
	It follows that $K_2$ does not contain \textit{any} elements represented by simple loops of $\S$, since 
	the nonseparating simple loops in $\S$ are homologically nontrivial, and the separating simple loops of $\S$ 
	lift to homologically nontrivial loops in $\tS$.  Hence if we let $G = \pi_1 \S / K_2$ and let
	$f : \pi_1 \S \to G$ be the quotient map, then $f$ is a noninjective map with no elements represented by 
	essential simple loops in its kernel.  If $A = \pi_1 \tS / K_2 \iso H_1(\tS)$, then $A$ is abelian and 
	we have
	\[
		G/A = (\pi_1 \S / K_2)/(\pi_1 \tS / K_2) \iso \pi_1\S / \pi_1 \tS \iso \pi_1 \S / K_1 
			\iso H_1(\S ),
	\]
	which is also abelian.  Thus we see that $G$ is metabelian, for it fits into the short exact sequence
	\[
		1 \longrightarrow  H_1(\tS ) 
			\longrightarrow G 
			\longrightarrow H_1(\S ) 
			\longrightarrow 1,
	\]
	and so we have constructed the desired group $G$ and map $f: \pi_1 \S \to G$.
\end{example}

\bibliographystyle{siam}

\end{document}